\documentclass[12pt, reqno]{amsart}
\usepackage{amsmath, amsthm, amscd, amsfonts, amssymb, graphicx, color}
\usepackage[bookmarksnumbered, colorlinks, plainpages]{hyperref}
\hypersetup{colorlinks=true,linkcolor=red, anchorcolor=green, citecolor=cyan, urlcolor=red, filecolor=magenta, pdftoolbar=true}

\textheight 22.7truecm \textwidth 16.8truecm
\setlength{\oddsidemargin}{0.05in}\setlength{\evensidemargin}{0.05in}

\setlength{\topmargin}{-.5cm}

\newtheorem{theorem}{Theorem}[section]
\newtheorem{lemma}[theorem]{Lemma}
\newtheorem{proposition}[theorem]{Proposition}
\newtheorem{corollary}[theorem]{Corollary}
\theoremstyle{definition}
\newtheorem{definition}[theorem]{Definition}
\newtheorem{example}[theorem]{Example}

\theoremstyle{remark}
\newtheorem{remark}[theorem]{Remark}
\numberwithin{equation}{section}

\begin{document}

	\setcounter{page}{1}
	
	\title[$K$-$g$-frames in Hilbert module over locally-$C^*$-algebras ]{$K$-$g$-frames in Hilbert module over locally-$C^*$-algebras}
	
	\author{Roumaissae Eljazzar$^{*}$, Mohammed Mouniane \MakeLowercase{and} Mohamed Rossafi}
	
	\address{Department of Mathematics, University of Ibn Tofail, Kenitra, Morocco}
	\email{roumaissae.eljazzar@uit.ac.ma; mohammed.mouniane@uit.ac.ma; mohamed.rossafi@uit.ac.ma}
	
	\subjclass[2020]{Primary 42C15; Secondary 46L05.}
	
		\keywords{$K$-$g$-frame, $K$-duals, Pro-$C^{\ast}$-algebra, Hilbert pro-$C^{\ast}$-modules.}
	
	\date{$^{*}$Corresponding author}
	
	\setcounter{page}{1}

	\begin{abstract}
		
		This paper explores the concept of $K$-$g$-frames in locally $C^*$-algebras, which are shown to be more general than $g$-frames. The authors first introduce the notion of a $g$-orthonormal basis and utilize it to define the $g$-operator, a crucial element for studying the construction of $K$-$g$-frames in locally $C^*$-algebras. The paper establishes a relationship between $g$-frames and $K$-$g$-frames and introduces the $K$-dual $g$-frame along with its properties. Finally, the authors characterize $K$-$g$-frames through two other related concepts.
	\end{abstract}

	\maketitle
	
		\section{Introduction and preliminaries }

	The concept of Locally $C^*$-algebras, also referred to as pro-$C^*$-algebras, represents an extension of the $C^*$-algebras framework. The notion of Locally $C^*$-algebras was first introduced in the scholarly literature in 1971 by A. Inoue \cite{Inoue}. These algebras have been studied under various terminologies, including pro-$C^*$-algebras as mentioned by D. Voiculescu and N.C. Philips, multinormed $C^*$-algebras as described by A. Dosiev, and $LMC^*$-algebras as identified in the works of G. Lassner and K. Schm\"{u}dgen.

	Hilbert pro-$C^*$-modules can be analogized to Hilbert spaces, with the distinction that the inner product values are in a pro-$C^*$-algebra rather than in the complex number field. For a comprehensive overview, readers are directed to the following papers: \cite{Rouma1, Frago, Frag, Inoue, Mallios, Philip, Philips}.

	A pro-$C^{\ast}$-algebra is viewed as a complete Hausdorff complex topological $\ast$-algebra, denoted as $\mathcal{A}$, where its topology is defined by its continuous $C^{\ast}$-seminorms. Specifically, a net $\{a_{\alpha}\}$ is said to converge to $0$ if and only if $p_{\alpha}(a_{\alpha})$ converges to $0$ for every continuous $C^{\ast}$-seminorm $p_{\alpha}$ on $\mathcal{A}$. Furthermore, it is characterized by the following property:
	
	\begin{enumerate}
		\item[1)] $p_{\alpha}(\gamma \beta) \leq p_{\alpha}(\gamma)p_{\alpha}(\beta)$
		\item[2)] $p_{\alpha}(\gamma^{\ast}\gamma)=p_{\alpha}(\gamma)^{2}$.
	\end{enumerate}

	For every $\gamma, \beta \in \mathcal{A}$, the term $sptr(\gamma)$ refers to the spectrum of $\gamma$, defined as: $sptr(\gamma) = \left\{ \lambda \in \mathbb{C} : \lambda 1_{\mathcal{A}} - \gamma \right\}$ is not invertible, for all $\gamma \in \mathcal{A}$. Here, $\mathcal{A}$ represents a unital pro-$C^{*}$-algebra with unity $1_{\mathcal{A}}$.
	
	The notation $Se(\mathcal{A})$ is used to denote the set of all continuous $C^{\ast}$-seminorms on $\mathcal{A}$. Furthermore, $\mathcal{A}^{+}$ signifies the set of all positive elements in $\mathcal{A}$, which forms a closed convex subset under the $C^{*}$-seminorms on $\mathcal{A}$.

	$\mathcal{H}_{\mathcal{A}}$  denotes the set of all sequences  $\left(\gamma_{n}\right)_{n}$ with $\gamma_{n} \in \mathcal{A}$ such that $\sum_{n} \gamma_{n}^{*} \gamma_{n}$ converges in $\mathcal{A}$. 
	\begin{example}
		Every $C^{*}$-algebra qualifies as a locally $C^{*}$-algebra.
	\end{example}

	\begin{definition}[\cite{Philips}]
		A pre-Hilbert module over a locally $C^{\ast}$-algebra $\mathcal{A}$ is defined as a complex vector space $\mathcal{U}$ that simultaneously functions as a left $\mathcal{A}$-module, aligning with the complex algebra structure. This module is endowed with an $\mathcal{A}$-valued inner product $\langle .,.\rangle: \mathcal{U} \times \mathcal{U} \rightarrow \mathcal{A}$, which exhibits $\mathbb{C}$-linearity and $\mathcal{A}$-linearity in its first argument and fulfills the following criteria:
		\begin{enumerate}
			\item[1)] For all $\xi, \eta \in \mathcal{U}$, it holds that $\langle \xi, \eta\rangle^{*}=\langle \eta, \xi\rangle$.
			\item[2)] For every $\xi \in \mathcal{U}$, the condition $\langle \xi, \xi\rangle \geq 0$ is satisfied.
			\item[3)] The equality $\langle \xi, \xi\rangle = 0$ is true if and only if $\xi = 0$.
		\end{enumerate}
		The space $\mathcal{U}$ is referred to 
		Hilbert $\mathcal{A}$-module (or Hilbert pro-$C^{\ast}$-module over $\mathcal{A}$) when it achieves completeness in relation to the topology that emerges from the family of seminorms defined by:
		$$
		\bar{p}_{\mathcal{U}}(\xi) = \sqrt{p_{\alpha}(\langle \xi, \xi\rangle)} \quad \forall \xi \in \mathcal{U}, p \in Se(\mathcal{A}).
		$$
	\end{definition}

	Throughout the remainder of this document, we consider $\mathcal{A}$ as a pro-$C^{\ast}$-algebra. Furthermore, $\mathcal{U}$ and $\mathcal{V}$ are designated as Hilbert modules over the algebra $\mathcal{A}$, with $I$ and $J$ representing countably infinite sets of indices.

	An operator from $\mathcal{U}$ to $\mathcal{V}$ is defined as any bounded $\mathcal{A}$-module map from $\mathcal{U}$ to $\mathcal{V}$. The collection of all such operators from $\mathcal{U}$ to $\mathcal{V}$ is denoted by $Hom_{\mathcal{A}}(\mathcal{U}, \mathcal{V})$.
	
	\begin{definition}[\cite{Azhini}]
		Consider $\mathfrak{F}: \mathcal{U} \rightarrow \mathcal{V}$ as an $\mathcal{A}$-module map. The operator $\mathfrak{F}$ is termed adjointable if there exists a map $\mathfrak{F}^{\ast}: \mathcal{V} \rightarrow \mathcal{U}$ satisfying $\langle \mathfrak{F} \xi, \eta\rangle = \langle \xi, \mathfrak{F}^{\ast} \eta\rangle$ for all $\xi \in \mathcal{U}$, $\eta \in \mathcal{V}$. $\mathfrak{F}$ is considered bounded if, for all $p \in Se(\mathcal{A})$, there exists $N_{p} > 0$ such that 
		$$\bar{p_{\alpha}}_{\mathcal{V}}(\mathfrak{F} \xi) \leq N_{p} \bar{p_{\alpha}}_{\mathcal{U}}(\xi), \; \forall \xi \in \mathcal{U}.$$ 
		
		The set of all adjointable operators from $\mathcal{U}$ to $\mathcal{V}$ is represented by $Hom_{\mathcal{A}}^{\ast}(\mathcal{U}, \mathcal{V})$, and $Hom_{\mathcal{A}}^{\ast}(\mathcal{U}) = Hom_{\mathcal{A}}^{\ast}(\mathcal{U}, \mathcal{U})$.
	\end{definition}

	\begin{definition}[\cite{Alizadeh}]
		Consider the operator $\mathfrak{F}: \mathcal{U} \rightarrow \mathcal{V}$. We say it is uniformly bounded below if there exists a constant $A>0$ such that for each $p_{\alpha} \in Se(\mathcal{A})$,
		\begin{equation*}
			\bar{p_{\alpha}}_{\mathcal{V}}(\mathfrak{F} \xi) \leqslant A \bar{p_{\alpha}}_{\mathcal{U}}(\xi), \quad \forall \xi \in \mathcal{U}.
		\end{equation*}
		Similarly, it is termed uniformly bounded above if a constant $A^{\prime}>0$ ensures that for every $p \in Se(\mathcal{A})$,
		\begin{equation*}
			\bar{p_{\alpha}}_{\mathcal{V}}(\mathfrak{F} \xi) \geqslant A^{\prime} \bar{p_{\alpha}}_{\mathcal{U}}(\xi), \quad \forall \xi \in \mathcal{U}
		\end{equation*}
		\begin{equation*}
			\|\mathfrak{F}\|_{\infty}=\inf \{N: N \text { is an upper bound for } \mathfrak{F}\}
		\end{equation*}
		\begin{equation*}
			\hat{p}_{\mathcal{V}}(\mathfrak{F})=\sup \left\{\bar{p_{\alpha}}_{\mathcal{V}}(\mathfrak{F}(\xi)): \xi \in \mathcal{U}, \quad \bar{p_{\alpha}}_{\mathcal{U}}(\xi) \leqslant 1\right\}.
		\end{equation*}
		It is clearly evident that, $\hat{p}(\mathfrak{F}) \leqslant\|\mathfrak{F}\|_{\infty}$ for all $p \in Se(\mathcal{A})$.   
	\end{definition}

	\begin{proposition}[\cite{Azhini}] \label{Prop2.6}
		Assume $\mathfrak{F}$ belongs to $Hom_{\mathcal{A}}^{\ast}(\mathcal{U})$ and is an invertible operator, with both $\mathfrak{F}$ and its inverse $\mathfrak{F}^{-1}$ being uniformly bounded. For any element $\eta \in \mathcal{U}$, the following inequality holds:
		$$
		\left\|\mathfrak{F}^{-1}\right\|_{\infty}^{-2}\langle \eta, \eta \rangle \leq \langle \mathfrak{F} \eta, \mathfrak{F} \eta \rangle \leq \|\mathfrak{F}\|_{\infty}^{2}\langle \eta, \eta \rangle.
		$$
	\end{proposition}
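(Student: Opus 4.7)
The plan is to reduce the claim to a single operator inequality and then use an inversion trick. The upper bound will come from the $C^{\ast}$-module identity $\langle \mathfrak{F}\eta,\mathfrak{F}\eta\rangle=\langle \mathfrak{F}^{\ast}\mathfrak{F}\eta,\eta\rangle$ combined with the operator estimate $\mathfrak{F}^{\ast}\mathfrak{F}\leq \|\mathfrak{F}\|_{\infty}^{2}\,I$ in $\mathrm{Hom}_{\mathcal{A}}^{\ast}(\mathcal{U})$. The lower bound will then follow by applying the upper bound to $\mathfrak{F}^{-1}$ acting on $\mathfrak{F}\eta$, a standard self-dualization move.

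\textbf{Step 1 (Upper bound).} I would first rewrite $\langle \mathfrak{F}\eta,\mathfrak{F}\eta\rangle=\langle \mathfrak{F}^{\ast}\mathfrak{F}\eta,\eta\rangle$ using adjointability. The central task is then to justify the operator inequality $\mathfrak{F}^{\ast}\mathfrak{F}\leq \|\mathfrak{F}\|_{\infty}^{2}\,I$, where the right-hand side means multiplication by the scalar $\|\mathfrak{F}\|_{\infty}^{2}\cdot 1_{\mathcal{A}}$. By the very definition of $\|\mathfrak{F}\|_{\infty}$ and of the seminorms $\bar p_{\mathcal{V}}$, one has, for every $p\in Se(\mathcal{A})$ and every $\xi\in\mathcal{U}$,
\[
p\bigl(\langle \mathfrak{F}\xi,\mathfrak{F}\xi\rangle\bigr)=\bar p_{\mathcal{V}}(\mathfrak{F}\xi)^{2}\leq \|\mathfrak{F}\|_{\infty}^{2}\,\bar p_{\mathcal{U}}(\xi)^{2}=\|\mathfrak{F}\|_{\infty}^{2}\,p\bigl(\langle\xi,\xi\rangle\bigr).
\]
Since $\mathfrak{F}^{\ast}\mathfrak{F}$ is a positive element of $\mathrm{Hom}_{\mathcal{A}}^{\ast}(\mathcal{U})$, I would pass to the projective-limit description of $\mathcal{A}$: each seminorm $p$ gives a genuine $C^{\ast}$-quotient $\mathcal{A}_{p}$ and a Hilbert $\mathcal{A}_{p}$-module $\mathcal{U}_{p}$ in which the classical $C^{\ast}$-module inequality $T^{\ast}T\leq \|T\|^{2} I$ applies; reassembling these inequalities across all $p\in Se(\mathcal{A})$ yields $\|\mathfrak{F}\|_{\infty}^{2}\,I-\mathfrak{F}^{\ast}\mathfrak{F}\geq 0$ in $\mathrm{Hom}_{\mathcal{A}}^{\ast}(\mathcal{U})$. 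Pairing this positive operator against $\eta$ gives exactly $\langle \mathfrak{F}\eta,\mathfrak{F}\eta\rangle\leq \|\mathfrak{F}\|_{\infty}^{2}\langle\eta,\eta\rangle$.

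\textbf{Step 2 (Lower bound).} Since $\mathfrak{F}$ is invertible with $\mathfrak{F}^{-1}$ uniformly bounded, Step 1 applied to $\mathfrak{F}^{-1}\in \mathrm{Hom}_{\mathcal{A}}^{\ast}(\mathcal{U})$ at the vector $\mathfrak{F}\eta$ produces
\[
\langle \eta,\eta\rangle=\langle \mathfrak{F}^{-1}\mathfrak{F}\eta,\mathfrak{F}^{-1}\mathfrak{F}\eta\rangle\leq \|\mathfrak{F}^{-1}\|_{\infty}^{2}\,\langle \mathfrak{F}\eta,\mathfrak{F}\eta\rangle.
\]
Multiplying both sides by the positive scalar $\|\mathfrak{F}^{-1}\|_{\infty}^{-2}$ gives the desired left inequality.

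\textbf{Main obstacle.} The only nontrivial point is promoting the family of scalar seminorm estimates obtained from the definition of $\|\mathfrak{F}\|_{\infty}$ to a genuine $\mathcal{A}$-valued (order) inequality $\mathfrak{F}^{\ast}\mathfrak{F}\leq \|\mathfrak{F}\|_{\infty}^{2}\,I$. In the $C^{\ast}$-algebra setting this is standard, but in the pro-$C^{\ast}$ setting it requires a careful use of either functional calculus for the positive operator $\mathfrak{F}^{\ast}\mathfrak{F}$ or a reduction through the quotient $C^{\ast}$-modules; once this is handled, the rest of the argument is formal.
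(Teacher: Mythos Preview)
The paper does not prove this proposition; it is quoted from \cite{Azhini} in the preliminaries (note the attribution in the proposition header) and is immediately followed by the next cited lemma, with no proof in between. There is therefore nothing in the present paper to compare your argument against.

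Your argument is correct and is the standard one. The only genuinely nontrivial step is the one you already flag: upgrading the seminorm estimate $\bar p(\mathfrak{F}\xi)\le \|\mathfrak{F}\|_{\infty}\,\bar p(\xi)$, valid for every $p\in Se(\mathcal{A})$, to the $\mathcal{A}$-valued order inequality $\mathfrak{F}^{\ast}\mathfrak{F}\le \|\mathfrak{F}\|_{\infty}^{2}I$. Your projective-limit route does this cleanly: in each $C^{\ast}$-quotient $\mathcal{U}_{p}$ the induced operator $\mathfrak{F}_{p}$ satisfies $\|\mathfrak{F}_{p}\|\le \|\mathfrak{F}\|_{\infty}$, so the classical Hilbert $C^{\ast}$-module inequality gives $\mathfrak{F}_{p}^{\ast}\mathfrak{F}_{p}\le \|\mathfrak{F}\|_{\infty}^{2}I$ in $\mathcal{A}_{p}$, and since positivity in a pro-$C^{\ast}$-algebra is detected quotient by quotient, the inequality lifts to $\mathcal{A}$. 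For Step~2 one small point: you assert $\mathfrak{F}^{-1}\in \mathrm{Hom}_{\mathcal{A}}^{\ast}(\mathcal{U})$, whereas the hypothesis only says $\mathfrak{F}^{-1}$ is uniformly bounded. This is harmless, since the same projective-limit argument applied to $\mathfrak{F}_{p}^{-1}$ (which exists with $\|\mathfrak{F}_{p}^{-1}\|\le \|\mathfrak{F}^{-1}\|_{\infty}$) gives the lower bound directly without ever invoking adjointability of $\mathfrak{F}^{-1}$; alternatively one can check that an invertible adjointable operator with uniformly bounded inverse automatically has adjointable inverse.
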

	
	\begin{lemma}[\cite{Rouma}]\label{Dougl1}
		In the setting where $\mathcal{U}$ is a Hilbert $\mathcal{A}$-module within a pro-$C^{*}$-algebra $\mathcal{A}$, and given $T, Z \in Hom_{\mathcal{A}}^{*}(\mathcal{U})$, the following conditions are equivalent when the range of $Z$, denoted as $Ran(Z)$, is closed:
		\begin{enumerate}
			\item $Ran(T)\subseteq Ran(Z)$.
			\item The relation $T T^{*} \leq \alpha^{2} Z Z^{*}$ is valid for some non-negative $\alpha$.
			\item An operator $U$, uniformly bounded and within $Hom_{\mathcal{A}}^{*}(\mathcal{U})$, exists such that $T=Z U$.
		\end{enumerate}
	\end{lemma}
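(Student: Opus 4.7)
I would establish the chain $(3)\Rightarrow(1)$, $(3)\Rightarrow(2)$, $(2)\Rightarrow(3)$, and $(1)\Rightarrow(3)$, which yields the three-way equivalence.

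The implication $(3)\Rightarrow(1)$ is immediate: $T=ZU$ forces $\mathrm{Ran}(T)\subseteq\mathrm{Ran}(Z)$. For $(3)\Rightarrow(2)$ I apply Proposition~\ref{Prop2.6} to the uniformly bounded $U$, obtaining $UU^{\ast}\leq\|U\|_{\infty}^{2}\,\mathrm{id}_{\mathcal U}$, so
$$
TT^{\ast}=ZUU^{\ast}Z^{\ast}\leq\|U\|_{\infty}^{2}\,ZZ^{\ast},
$$
and $\alpha=\|U\|_{\infty}$ suffices. For $(1)\Rightarrow(3)$, the closed-range hypothesis on $Z$ provides a uniformly bounded adjointable Moore--Penrose pseudoinverse $Z^{\dagger}$ with $ZZ^{\dagger}$ the orthogonal projection onto $\mathrm{Ran}(Z)$; inclusion (1) then gives $ZZ^{\dagger}T=T$, and $U:=Z^{\dagger}T$ does the job.

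The central step is $(2)\Rightarrow(3)$, executed in the classical Douglas fashion adapted to the pro-$C^{\ast}$ setting. I would define $C$ on $\mathrm{Ran}(Z^{\ast})$ by $C(Z^{\ast}\xi):=T^{\ast}\xi$. Hypothesis (2) yields, for every $\xi\in\mathcal U$,
$$
\langle T^{\ast}\xi,T^{\ast}\xi\rangle=\langle TT^{\ast}\xi,\xi\rangle\leq\alpha^{2}\langle ZZ^{\ast}\xi,\xi\rangle=\alpha^{2}\langle Z^{\ast}\xi,Z^{\ast}\xi\rangle,
$$
which confirms well-definedness (if $Z^{\ast}\xi=0$ then $T^{\ast}\xi=0$) and gives the uniform seminorm bound $\bar p_{\mathcal U}(C\eta)\leq\alpha\,\bar p_{\mathcal U}(\eta)$ for every $p\in Se(\mathcal A)$ and $\eta\in\mathrm{Ran}(Z^{\ast})$. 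Closedness of $\mathrm{Ran}(Z)$ implies closedness of $\mathrm{Ran}(Z^{\ast})$ and that $\mathrm{Ran}(Z^{\ast})$ is orthogonally complemented in $\mathcal U$, so I can extend $C$ by zero on the orthogonal complement to obtain an adjointable uniformly bounded operator on all of $\mathcal U$. Taking $U:=C^{\ast}$ then yields $ZU=T$.

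The main obstacle lies in the pro-$C^{\ast}$-specific bookkeeping of this last step. One must ensure the extended $C$ is genuinely adjointable (not automatic for bounded module maps, but secured here by the orthogonal complementability that follows from closed range of $Z^{\ast}$), and the uniform-boundedness constant $\alpha$ must serve \emph{every} seminorm $p\in Se(\mathcal A)$ simultaneously. This uniformity is precisely what the operator inequality (2) encodes, so the $C^{\ast}$-algebra argument transfers once complementability is in hand, which is exactly where the closed-range hypothesis on $Z$ is indispensable.
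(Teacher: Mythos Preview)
The paper does not contain its own proof of this lemma: it is stated with a citation to \cite{Rouma} and used as a black box throughout, so there is no in-paper argument to compare against. Your outline follows the classical Douglas factorization strategy (define $C$ on $\mathrm{Ran}(Z^{\ast})$ via $Z^{\ast}\xi\mapsto T^{\ast}\xi$, use the operator inequality for well-definedness and the uniform bound, then extend by zero across the orthogonal complement afforded by the closed-range hypothesis), which is the standard route and almost certainly what the cited reference does as well; the pro-$C^{\ast}$ subtleties you flag---adjointability of the extension and uniformity of the bound across all seminorms---are the right ones to watch.
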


	\begin{definition}[\cite{Haddad}]
		We define a sequence $\Gamma=\left\{\Gamma_{i} \in Hom_{A}^{*}\left(\mathcal{U}, \mathcal{V}_{i}\right)\right\}_{i \in I}$ as a $g$-frame for $\mathcal{U}$ relative to the set $\left\{\mathcal{V}_{i}\right\}_{i \in I}$ if there exist positive constants $A$ and $B$ satisfying the following condition for every $\eta \in \mathcal{U}$:
		
		\begin{equation}\label{eqq}
			A\langle \eta, \eta \rangle \leq \sum_{i \in I}\left\langle\Gamma_{i} \eta, \Gamma_{i} \eta \right\rangle \leq B\langle \eta, \eta \rangle.
		\end{equation}
		
		Here, $A$ and $B$ are referred to as the $g$-frame bounds for $\Gamma$. The $g$-frame is termed tight if $A=B$, and Parseval if $A=B=1$. If only the upper bound is satisfied in \ref{eqq}, $\Gamma$ is identified as a $g$-Bessel sequence.
	\end{definition}
	
	\begin{lemma}[\cite{Haddad}]\label{Lemma2.3}
		A sequence $\left\{\Gamma_i \in Hom_{\mathcal{A}}^*\left(\mathcal{U}, \mathcal{V}_i\right)\right\}_{i \in I}$ forms a $g$-frame for $\mathcal{U}$ in relation to $\left\{\mathcal{V}_i\right\}_{i \in I}$ if and only if
		$$
		Q:\left\{g_i\right\}_{i \in I} \mapsto \sum_{i \in I} \Gamma_i^* g_i,
		$$
		can be characterized as a well-defined and bounded linear operator mapping from $\bigoplus_{i \in I} \mathcal{V}_i$ onto $\mathcal{U}$.
	\end{lemma}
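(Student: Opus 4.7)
\smallskip
\noindent\textbf{Plan.} I would follow the standard analysis/synthesis duality for $g$-frames, adapted to the pro-$C^\ast$-module setting, using Douglas' lemma (Lemma~\ref{Dougl1}) as the main structural tool.

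\smallskip
\noindent\textbf{Forward direction} ($\Rightarrow$). Assuming $\{\Gamma_i\}_{i\in I}$ is a $g$-frame with bounds $A,B>0$, I would first introduce the analysis operator $T\colon\mathcal{U}\to\bigoplus_{i\in I}\mathcal{V}_i$, $T\eta=\{\Gamma_i\eta\}_{i\in I}$. The upper frame bound guarantees $T\eta\in\bigoplus_{i\in I}\mathcal{V}_i$ and that $T$ is uniformly bounded with $\langle T\eta,T\eta\rangle\leq B\langle\eta,\eta\rangle$. A direct computation $\langle T\eta,\{g_i\}\rangle=\sum_i\langle\Gamma_i\eta,g_i\rangle=\langle\eta,\sum_i\Gamma_i^\ast g_i\rangle$ shows $T$ is adjointable on finite sequences with $T^\ast=Q$, and boundedness then extends $Q$ to the full direct sum. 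For surjectivity, I would consider the frame operator $S:=QQ^\ast=\sum_i\Gamma_i^\ast\Gamma_i$; the frame inequalities rewrite as $A\cdot\mathrm{Id}\leq S\leq B\cdot\mathrm{Id}$, forcing $S$ to be invertible, whence $Q$ is onto.

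\smallskip
\noindent\textbf{Reverse direction} ($\Leftarrow$). Assuming $Q$ is well-defined, bounded, and onto $\mathcal{U}$, I would identify its adjoint as $Q^\ast\eta=\{\Gamma_i\eta\}_{i\in I}$ (verified by duality against finitely supported sequences). Boundedness of $Q$ gives $\langle Q^\ast\eta,Q^\ast\eta\rangle\leq\|Q\|_\infty^{\,2}\langle\eta,\eta\rangle$, yielding the upper frame bound. For the lower bound, surjectivity means $\mathrm{Ran}(Q)=\mathcal{U}=\mathrm{Ran}(\mathrm{Id}_{\mathcal{U}})$, so applying Lemma~\ref{Dougl1} to the pair $(\mathrm{Id}_{\mathcal{U}},Q)$ produces $\alpha\geq 0$ with $\mathrm{Id}_{\mathcal{U}}\leq\alpha^2 QQ^\ast$; unpacking this yields $\alpha^{-2}\langle\eta,\eta\rangle\leq\sum_i\langle\Gamma_i\eta,\Gamma_i\eta\rangle$, which is the lower frame bound.

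\smallskip
\noindent\textbf{Main obstacle.} The delicate part will be ensuring $Q$ is genuinely well-defined and adjointable on the full module $\bigoplus_{i\in I}\mathcal{V}_i$ in the pro-$C^\ast$ setting, since convergence must hold relative to every continuous $C^\ast$-seminorm $p\in Se(\mathcal{A})$. I would handle this by considering finite partial sums $Q_F(\{g_i\})=\sum_{i\in F}\Gamma_i^\ast g_i$ for $F\subset I$ finite, showing via the upper frame bound and a Cauchy--Schwarz argument in $\mathcal{A}$ that $\{Q_F\}$ is Cauchy in every seminorm $\bar p_{\mathcal{U}}$, then defining $Q$ as the limit and verifying $Q\in Hom_{\mathcal{A}}^\ast(\bigoplus_i\mathcal{V}_i,\mathcal{U})$.
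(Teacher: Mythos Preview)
The paper does not supply its own proof of this lemma: it is quoted verbatim from \cite{Haddad} and left unproved, so there is nothing in the present paper to compare your argument against. Your outline is the standard analysis/synthesis argument and is essentially what one finds in \cite{Haddad}; in that sense it is correct in spirit.

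One technical point deserves care. When you invoke Lemma~\ref{Dougl1} in the reverse direction with the pair $(\mathrm{Id}_{\mathcal U},Q)$, note that the version of Douglas' lemma stated in this paper is formulated only for $T,Z\in Hom_{\mathcal A}^{\ast}(\mathcal U)$, i.e.\ for endomorphisms of a single module, whereas your $Q$ has domain $\bigoplus_{i}\mathcal V_i\neq\mathcal U$. You would either need to cite the more general between-modules version (which is what \cite{Rouma} actually proves), or else argue directly that surjectivity of $Q$ forces $S=QQ^{\ast}\in Hom_{\mathcal A}^{\ast}(\mathcal U)$ to be invertible and then read off the lower bound from $S^{-1}$. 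The latter route avoids any mismatch with the lemma as stated here and is closer to how the original source handles it.
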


	\section{$K$-$g$-frame in Hilbert Modules over Pro-$C^*$-Algebras}
	
	\begin{definition}
		Given $K \in Hom_{\mathcal{A}}^{*}(\mathcal{U})$, a sequence $\Gamma=\left\{\Gamma_{i} \in Hom_{A}^{*}\left(\mathcal{U}, \mathcal{V}_{i}\right)\right\}_{i \in I}$ is defined as a $K$-g-frame for $\mathcal{U}$ with regard to $\left\{\mathcal{V}_{i}\right\}_{i \in I}$ if there exist two positive constants $C$ and $D$ such that for all $\xi \in \mathcal{U}$, the following inequality is satisfied:
		$$
		C\langle K^{*} \xi, K^{*}\xi \rangle \leq \sum_{i \in I}\left\langle\Gamma_{i} \xi, \Gamma_{i} \xi \right\rangle \leq D\langle \xi, \xi \rangle.
		$$
		The constants $C$ and $D$ are known as the lower and upper bounds of the $K$-g-frame, respectively. If $C=D$, the $K$-g-frame is termed tight, and it is known as a Parseval frame when $C=D=1$.
	\end{definition}

	\begin{definition}
		Consider $\Gamma=\left\{\Gamma_i \in Hom_\mathcal{A}^*\left(\mathcal{U}, \mathcal{V}_i\right)\right\}_{i \in I}$ constituting a $K$-g-frame for $\mathcal{U}$ in relation to $\left\{\mathcal{V}_i\right\}_{i \in I}$, equipped with an analysis operator $T$. The frame operator $S: \mathcal{U} \rightarrow \mathcal{U}$ is then defined as follows:
		$$
		S \xi=T^* T \xi=\sum_{i \in I} \Gamma_i^* \Gamma_i \xi, \quad \xi \in \mathcal{U}.
		$$
	\end{definition}
	
	\begin{definition}
		A sequence $\left\{\Gamma_{i} \in Hom_{\mathcal{A}}^{*}(\mathcal{U},\mathcal{V}_{i}): i \in I\right\}$ is termed a g-orthonormal basis for $\mathcal{U}$ with respect to $\left\{\mathcal{V}_{i}\right\}_{i \in I}$ if it fulfills the following conditions:
		$$
		\begin{aligned}
			\left\langle\Gamma_{i}^{*} g_{i}, \Gamma_{j}^{*} g_{j}\right\rangle &=\delta_{i j}\left\langle g_{i}, g_{j}\right\rangle, \quad \forall i, j \in I, g_{i} \in \mathcal{V}_{i}, g_{j} \in \mathcal{V}_{j}, \\
			\sum_{i \in I}\bar{p_{\alpha}}_{\mathcal{U}}(\Gamma_{i} \xi)^{2} &=\bar{p_{\alpha}}_{\mathcal{U}}( \xi)^{2}, \quad \forall \xi \in \mathcal{U}.
		\end{aligned}
		$$
	\end{definition}

	\begin{definition}
		Given $K \in Hom_{\mathcal{A}}^*(\mathcal{U})$ and a $K$-g-frame $\left\{\Gamma_i\right\}_{i \in I}$ for $\mathcal{U}$ with respect to $\left\{\mathcal{V}_i\right\}_{i \in I}$, a $g$-frame sequence $\left\{\Xi_i\right\}_{i \in I}$ for $\mathcal{U}$ with respect to $\left\{\mathcal{V}_i\right\}_{i \in I}$ is considered a $K$-dual g-frame sequence of $\left\{\Gamma_i\right\}_{i \in I}$ if the following holds true:
		$$
		K \xi=\sum_{i \in I} \Gamma_i^* \Xi_i \xi, \quad \forall \xi \in \mathcal{U}.
		$$
	\end{definition}

	\begin{lemma}\label{Lemma2.5}
		Assume $\left\{\Gamma_i\right\}_{i \in I}$ as a g-Bessel sequence within $\mathcal{U}$, associated with $\left\{\mathcal{V}_i\right\}_{i \in I}$. The sequence $\left\{\Gamma_i\right\}_{i \in I}$ forms a $K$-g-frame for $\mathcal{U}$ with respect to $\left\{\mathcal{V}_i\right\}_{i \in I}$ if there is a constant $C>0$ such that the following condition is met: $S \geq C K K^*$, with $S$ being the frame operator for $\left\{\Gamma_i\right\}_{i \in I}$.
	\end{lemma}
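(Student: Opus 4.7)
The plan is essentially a direct computation, since the hypothesis is tailored exactly to the lower $K$-$g$-frame inequality. The upper bound in the $K$-$g$-frame definition is immediate: it is precisely the upper Bessel bound, which we are assuming.

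For the lower bound, the key observation is that the frame operator $S=\sum_{i\in I}\Gamma_i^{\ast}\Gamma_i$ encodes the middle quantity in the definition via the identity $\langle S\xi,\xi\rangle=\sum_{i\in I}\langle\Gamma_i\xi,\Gamma_i\xi\rangle$. First I would justify this identity: by the Bessel hypothesis $S$ is a well-defined adjointable operator on $\mathcal{U}$ (the sum converges in the appropriate sense via Lemma \ref{Lemma2.3}), and then continuity of the inner product in its first slot together with $\langle \Gamma_i^{\ast}\Gamma_i\xi,\xi\rangle=\langle\Gamma_i\xi,\Gamma_i\xi\rangle$ delivers the formula. Next I would rewrite the target quantity $\langle K^{\ast}\xi,K^{\ast}\xi\rangle$ as $\langle KK^{\ast}\xi,\xi\rangle$ by adjointness.

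With these two reformulations in hand, the hypothesized operator inequality $S\geq CKK^{\ast}$ evaluated at the vector $\xi$ gives
\[
C\langle K^{\ast}\xi,K^{\ast}\xi\rangle=C\langle KK^{\ast}\xi,\xi\rangle\leq\langle S\xi,\xi\rangle=\sum_{i\in I}\langle\Gamma_i\xi,\Gamma_i\xi\rangle,
\]
which is exactly the lower $K$-$g$-frame bound. Combining with the Bessel upper bound $\sum_{i\in I}\langle\Gamma_i\xi,\Gamma_i\xi\rangle\leq D\langle\xi,\xi\rangle$ yields the two-sided inequality defining a $K$-$g$-frame.

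There is no real obstacle in this argument; the only point worth care is confirming that the inequality $S\geq CKK^{\ast}$ in $\mathrm{Hom}_{\mathcal{A}}^{\ast}(\mathcal{U})$ (meaning $S-CKK^{\ast}\in\mathcal{A}^{+}$-valued via $\langle(S-CKK^{\ast})\xi,\xi\rangle\geq 0$) transfers to the pointwise inner-product inequality used above, which is simply the defining meaning of the order on adjointable operators on a Hilbert pro-$C^{\ast}$-module.
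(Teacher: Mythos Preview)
Your proof is correct and follows essentially the same route as the paper: both arguments rewrite $\sum_{i\in I}\langle\Gamma_i\xi,\Gamma_i\xi\rangle$ as $\langle S\xi,\xi\rangle$ and $\langle K^{\ast}\xi,K^{\ast}\xi\rangle$ as $\langle KK^{\ast}\xi,\xi\rangle$, so that the operator inequality $S\geq CKK^{\ast}$ becomes the lower $K$-$g$-frame bound, with the Bessel condition supplying the upper bound. If anything, your version is more carefully written than the paper's, which records the equivalence in a single line.
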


	\begin{proof}
		The sequence $\left\{\Gamma_i\right\}_{i \in I}$ qualifies as a $K$-g-frame for $\mathcal{U}$ with respect to $\left\{\mathcal{V}_i\right\}_{i \in I}$ with bounds $C$, $D$ if and only if the following condition is satisfied:
		$$
		C\langle K^* \xi, K^{*} \xi \rangle \leq \sum_{i \in I}\langle \Gamma_i \xi, \Gamma_i \xi\rangle \leq D \langle \xi, \xi \rangle, \forall \xi \in \mathcal{U},
		$$
		which can be equivalently expressed as:
		$$
		\left\langle C K K^* \xi, \xi \right\rangle \leq \langle S \xi, \xi\rangle \leq \langle D \xi, \xi\rangle, \forall \xi \in \mathcal{U},
		$$
		where $S$ denotes the frame operator of the $K$-g-frame $\left\{\Gamma_i\right\}_{i \in I}$. Thus, the assertion is substantiated.
	\end{proof}

	\begin{definition}
		Consider $\Gamma_i \in Hom_{\mathcal{A}}^*(\mathcal{U},\mathcal{V}_{i})$.
		The sequence $\left\{\Gamma_i\right\}_{i \in I}$ is deemed $g$-complete if the set $\left\{\xi \in \mathcal{U}: \Gamma_i \xi = 0 \text{ for all } i \in I\right\}=\{0\}$.
	\end{definition}

	\begin{proposition}\label{prop3.6}
		The set $\left\{\Gamma_{i} \in Hom_{\mathcal{A}}^*(\mathcal{U},\mathcal{V}_{i}): i \in I\right\}$ attains g-completeness if and only if \\ $$\overline{\operatorname{span}}\left\{\Gamma_{i}^{*}\left(\mathcal{V}_{i}\right)\right\}_{i \in I}=\mathcal{U}.$$
	\end{proposition}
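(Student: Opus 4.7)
The proposition has the classical ``kernel of an operator is the orthogonal complement of the range of its adjoint'' flavor. The whole plan rests on the single identity $\langle \Gamma_i \xi, g_i\rangle = \langle \xi, \Gamma_i^* g_i\rangle$ coming from the definition of adjointability, plus the observation that an element of $\mathcal{V}_i$ annihilated against every element of $\mathcal{V}_i$ under $\langle\cdot,\cdot\rangle$ is zero (obtained by pairing with itself). I will argue both implications symmetrically by passing through the ``orthogonal'' reformulation of $g$-completeness.

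First, I will establish the key reformulation: for $\xi \in \mathcal{U}$,
\[
\Gamma_i \xi = 0 \text{ for all } i \in I \quad \Longleftrightarrow \quad \xi \perp \Gamma_i^{*}(\mathcal{V}_i) \text{ for all } i \in I.
\]
The $(\Rightarrow)$ direction uses $\langle \xi, \Gamma_i^* g_i\rangle = \langle \Gamma_i \xi, g_i\rangle = 0$; the $(\Leftarrow)$ direction uses the same identity with $g_i = \Gamma_i \xi$ to deduce $\langle \Gamma_i\xi,\Gamma_i\xi\rangle = 0$, hence $\Gamma_i\xi=0$. Because the $\mathcal{A}$-valued inner product is continuous in each slot, being orthogonal to every $\Gamma_i^*(\mathcal{V}_i)$ is equivalent to being orthogonal to $M := \overline{\operatorname{span}}\{\Gamma_i^*(\mathcal{V}_i)\}_{i\in I}$. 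Consequently, $g$-completeness is exactly the assertion that $M^{\perp} = \{0\}$.

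For the reverse direction of the proposition, assume $M = \mathcal{U}$. If $\Gamma_i\xi = 0$ for all $i$, then by the reformulation above $\xi \perp M = \mathcal{U}$, so $\langle \xi,\xi\rangle = 0$ and $\xi = 0$; hence $\{\Gamma_i\}$ is $g$-complete. For the forward direction, suppose $\{\Gamma_i\}$ is $g$-complete, i.e.\ $M^{\perp} = \{0\}$, and suppose for contradiction that $M \subsetneq \mathcal{U}$. Pick any $\xi_0 \in \mathcal{U}\setminus M$ and consider its orthogonal decomposition with respect to the closed submodule $M$: this produces a nonzero element in $M^{\perp}$, contradicting $M^{\perp} = \{0\}$ and completing the proof.

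\textbf{Main obstacle.} The delicate point is the last step: the standard ``orthogonal decomposition'' argument against a closed submodule requires that $M$ be orthogonally complemented in $\mathcal{U}$, which is not automatic in a Hilbert pro-$C^{*}$-module (unlike in Hilbert spaces). In the setting of this paper one handles this either by noting that the only use made is of the equivalence $M^{\perp} = \{0\} \Leftrightarrow \overline{M} = \mathcal{U}$, which holds via the density properties of Hilbert $\mathcal{A}$-modules used throughout the paper, or by invoking the standard complementability result for closed submodules that are ranges of adjointable projections built from the frame operator. This is the technical step I would expect to spell out carefully; everything else is a formal manipulation of the adjoint identity.
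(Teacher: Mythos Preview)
Your approach coincides with the paper's: both directions hinge on the adjoint identity $\langle \Gamma_i\xi, g_i\rangle = \langle \xi, \Gamma_i^* g_i\rangle$ to show that $g$-completeness is equivalent to $M^{\perp}=\{0\}$ for $M=\overline{\operatorname{span}}\{\Gamma_i^*(\mathcal{V}_i)\}$, and the implication $M=\mathcal{U}\Rightarrow g$-complete is argued identically. The step you single out as the main obstacle---deducing $M=\mathcal{U}$ from $M^{\perp}=\{0\}$ without assuming orthogonal complementability---is not justified in the paper either; it simply asserts ``it suffices to show that if $\zeta\perp \operatorname{span}\{\Gamma_i^*(\mathcal{V}_i)\}$ then $\zeta=0$'' and moves on, so your write-up is at least as complete as the original and is in fact more candid about where an extra argument (or standing hypothesis) is required.
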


	\begin{proof}
		Assume $\left\{\Gamma_{i} \in Hom_{\mathcal{A}}^*(\mathcal{U},\mathcal{V}_{i}): i \in I\right\}$ to be $g$-complete. Given that $\overline{\operatorname{span}}\left\{\Gamma_{i}^{*}\left(\mathcal{V}_{i}\right)\right\}_{i \in I} \subseteq \mathcal{U}$, it suffices to show that if $\zeta \in \mathcal{U}$ and $\zeta \in \operatorname{span}\left\{\Gamma_{i}^{*}\left(\mathcal{V}_{i}\right)\right\}_{i \in I}$, then $\zeta=0$. Let $\zeta \in \mathcal{U}$ and $\zeta \perp \operatorname{span}\left\{\Gamma_{i}^{*}\left(\mathcal{V}_{i}\right)\right\}_{i \in I}$. For any $i \in I, \zeta \perp \Gamma_{i}^{*} \Gamma_{i}(\zeta)$ implies that for all $i \in I$,
		$$
		\bar{p_{\alpha}}_{\mathcal{U}}(\Gamma_{i}\zeta)^{2}=p_{\alpha}(\left\langle \zeta, \Gamma_{i}^{*} \Gamma_{i}(\zeta)\right\rangle)=0 .
		$$
		Therefore, by $g$-completeness of $\left\{\Gamma_{i} \in Hom_{\mathcal{A}}^*(\mathcal{U},\mathcal{V}_{i}): i \in I\right\}$, it follows that $\zeta=0$. \\
		Conversely, if $\overline{\operatorname{span}}\left\{\Gamma_{i}^{*}\left(\mathcal{V}_{i}\right)\right\}_{i \in I}=\mathcal{U}$, and for a given $\zeta \in \mathcal{U}$, $\Gamma_{i} \zeta=0$ for all $i \in I$, then for each $\theta \in \mathcal{V}_{i}$
		$$
		\left\langle\Gamma_{i} \zeta, \theta \right\rangle=\left\langle \zeta, \Gamma_{i}^{*} \theta\right\rangle=0,
		$$
		leading to $\zeta \perp \operatorname{span}\left\{\Gamma_{i}^{*}\left(\mathcal{V}_{i}\right)\right\}_{i \in I}$. Thus, $\zeta \perp \overline{\operatorname{span}}\left\{\Gamma_{i}^{*}\left(\mathcal{V}_{i}\right)\right\}_{i \in I}=\mathcal{U}$. This implies that $\zeta=0$, establishing the $g$-completeness of $\left\{\Gamma_{i} \in Hom_{\mathcal{A}}^*\left(\mathcal{U}, \mathcal{V}_{i}\right): i \in I\right\}$.
	\end{proof}

	\begin{proposition}\label{prop3.6}
		A necessary and sufficient condition for the set $\left\{\Gamma_{i} \in Hom_{\mathcal{A}}^*(\mathcal{U},\mathcal{V}_{i}): i \in I\right\}$ to be g-complete is that 
		$$\overline{\operatorname{span}}\left\{\Gamma_{i}^{*}\left(\mathcal{V}_{i}\right)\right\}_{i \in I} = \mathcal{U}.$$
	\end{proposition}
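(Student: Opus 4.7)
The plan is to prove the biconditional by contrapositive-style arguments in each direction, exploiting only the adjoint relation $\langle \Gamma_i\zeta,\theta\rangle=\langle\zeta,\Gamma_i^{\ast}\theta\rangle$ and the fact that in a Hilbert pro-$C^{\ast}$-module a vector orthogonal to a dense subset must be zero. Since the statement is phrased in the form ``$g$-complete $\Longleftrightarrow\overline{\operatorname{span}}\{\Gamma_i^{\ast}(\mathcal{V}_i)\}=\mathcal{U}$'', I would split the argument cleanly into ($\Rightarrow$) and ($\Leftarrow$), as this avoids having to track compatible conditions simultaneously.

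For ($\Rightarrow$), I would pick $\zeta\in\mathcal{U}$ with $\zeta\perp\operatorname{span}\{\Gamma_i^{\ast}(\mathcal{V}_i)\}_{i\in I}$ and aim to conclude $\zeta=0$, which suffices to show the closed span is all of $\mathcal{U}$. The key observation is that $\Gamma_i^{\ast}\Gamma_i\zeta$ lies in $\Gamma_i^{\ast}(\mathcal{V}_i)$, so $\langle\zeta,\Gamma_i^{\ast}\Gamma_i\zeta\rangle=0$, which by the adjoint property gives $\langle\Gamma_i\zeta,\Gamma_i\zeta\rangle=0$, hence $\Gamma_i\zeta=0$ for all $i\in I$; applying $g$-completeness then forces $\zeta=0$. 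The only subtlety is to note that this works seminorm-by-seminorm: $\bar{p}_{\mathcal{U}}(\Gamma_i\zeta)^{2}=p_{\alpha}(\langle\Gamma_i\zeta,\Gamma_i\zeta\rangle)=0$ for every $p_\alpha\in Se(\mathcal{A})$.

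For ($\Leftarrow$), suppose $\overline{\operatorname{span}}\{\Gamma_i^{\ast}(\mathcal{V}_i)\}_{i\in I}=\mathcal{U}$ and $\zeta\in\mathcal{U}$ satisfies $\Gamma_i\zeta=0$ for all $i\in I$. Then for each $\theta\in\mathcal{V}_i$, the adjoint relation gives $\langle\zeta,\Gamma_i^{\ast}\theta\rangle=\langle\Gamma_i\zeta,\theta\rangle=0$, so $\zeta$ is orthogonal to the raw span and, by continuity of the inner product, to its closure. Density therefore yields $\langle\zeta,\zeta\rangle=0$ and hence $\zeta=0$.

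The only place that is not purely formal is the passage from orthogonality to the raw span to orthogonality to its closure: this requires continuity of $\langle\cdot,\cdot\rangle$ with respect to each seminorm $\bar{p}_{\mathcal{U}}$, which follows from the Cauchy--Schwarz-type estimate available in Hilbert pro-$C^{\ast}$-modules. This is the main point to flag, but it is standard and does not require new machinery. Note that the statement being proved is identical in content to the preceding Proposition~\ref{prop3.6}, so an alternative and cleanest option is simply to remark that the proof is exactly the one already given.
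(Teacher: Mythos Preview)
Your proof is correct and follows essentially the same route as the paper: both directions use the adjoint relation to pass between $\Gamma_i\zeta=0$ and $\zeta\perp\Gamma_i^{\ast}(\mathcal{V}_i)$, with $g$-completeness or density supplying the conclusion $\zeta=0$. Your explicit mention of continuity of the inner product when passing to the closure is a small clarifying addition, but the argument is otherwise identical to the paper's.
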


	\begin{proof}
		Let's start by assuming that the set $\left\{\Gamma_{i} \in Hom_{\mathcal{A}}^*(\mathcal{U},\mathcal{V}_{i}): i \in I\right\}$ qualifies as g-complete. Noting that $\overline{\operatorname{span}}\left\{\Gamma_{i}^{*}\left(\mathcal{V}_{i}\right)\right\}_{i \in I}$ is a subset of $\mathcal{U}$, we need to demonstrate that any element $\xi \in \mathcal{U}$, which is orthogonal to $\operatorname{span}\left\{\Gamma_{i}^{*}\left(\mathcal{V}_{i}\right)\right\}_{i \in I}$, must necessarily be zero i.e $\xi=0$. For every such element $\xi$ in $\mathcal{U}$ and for all $i \in I$, the orthogonality of $\xi$ to $\Gamma_{i}^{*} \Gamma_{i}(\xi)$ leads to 
		$$
		\bar{p_{\alpha}}_{\mathcal{U}}(\Gamma_{i}\xi)^{2} = p_{\alpha}(\left\langle \xi, \Gamma_{i}^{*} \Gamma_{i}(\xi)\right\rangle) = 0.
		$$
		This, in light of the g-completeness of $\left\{\Gamma_{i} \in Hom_{\mathcal{A}}^*(\mathcal{U},\mathcal{V}_{i}): i \in I\right\}$, implies that $\xi=0$.
		
		On the flip side, if $\overline{\operatorname{span}}\left\{\Gamma_{i}^{*}\left(\mathcal{V}_{i}\right)\right\}_{i \in I}$ is equal to $\mathcal{U}$ and for a particular $\xi \in \mathcal{U}$, we find that $\Gamma_{i} \xi$ equals zero for all $i \in I$, then for any $\eta$ belonging to $\mathcal{V}_{i}$, we have
		$$
		\left\langle\Gamma_{i} \xi, \eta \right\rangle = \left\langle \xi, \Gamma_{i}^{*} \eta\right\rangle = 0,
		$$
		which means $\xi$ is orthogonal to $\operatorname{span}\left\{\Gamma_{i}^{*}\left(\mathcal{V}_{i}\right)\right\}_{i \in I}$, and consequently to $\overline{\operatorname{span}}\left\{\Gamma_{i}^{*}\left(\mathcal{V}_{i}\right)\right\}_{i \in I}=\mathcal{U}$.
		
		This confirms that $\xi=0$, thereby proving the g-completeness of $\left\{\Gamma_{i} \in Hom_{\mathcal{A}}^*\left(\mathcal{U}, \mathcal{V}_{i}\right): i \in I\right\}$.
	\end{proof}

	\begin{lemma}\label{lemma2.7}
		Suppose $\left\{\mathfrak{E}_{i} \in Hom_{\mathcal{A}}^*\left(\mathcal{U}, \mathcal{V}_{i}\right): i \in I \right\}$ constitutes a g-orthonormal basis for $\mathcal{U}$  with respect to $\left\{\mathcal{V}_{i}: i \in I \right\}$. In this case,  $\left\{\Gamma_{i} \in Hom_{\mathcal{A}}^*\left(\mathcal{U}, \mathcal{V}_{i}\right): i \in I\right\}$ will be a $g$-frame sequence with respect to $\left\{\mathcal{V}_{i}: i \in I\right\}$ if and only if a singular bounded operator $Q: \mathcal{U} \to \mathcal{U}$ exists such that $\Gamma_{i}=\mathfrak{E}_{i} Q^{*}$ for each $i \in I$.
	\end{lemma}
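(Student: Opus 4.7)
The plan is to use the fact that a g-orthonormal basis induces a unitary between $\mathcal{U}$ and $\bigoplus_{i\in I}\mathcal{V}_i$, and to transport the factorization problem into this setting so that the question reduces to properties of a single operator on $\mathcal{U}$.

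First I would extract two consequences of the g-orthonormal basis definition. The identity $\langle \mathfrak{E}_i^* g_i,\mathfrak{E}_j^* g_j\rangle=\delta_{ij}\langle g_i,g_j\rangle$, via the defining property of the adjoint, yields the biorthogonality relation $\mathfrak{E}_j\mathfrak{E}_i^*=\delta_{ij}I_{\mathcal{V}_j}$. The seminorm Parseval identity $\sum_i \bar p_\alpha(\mathfrak{E}_i\xi)^2=\bar p_\alpha(\xi)^2$, holding for every $p_\alpha\in Se(\mathcal{A})$, delivers the reconstruction formula $\sum_{i\in I}\mathfrak{E}_i^*\mathfrak{E}_i=I_\mathcal{U}$; equivalently, the analysis operator $T_\mathfrak{E}\colon \xi\mapsto\{\mathfrak{E}_i\xi\}_i$ is a unitary from $\mathcal{U}$ onto $\bigoplus_{i\in I}\mathcal{V}_i$.

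For the sufficiency direction, assume $\Gamma_i=\mathfrak{E}_iQ^*$ for some bounded $Q\in Hom_\mathcal{A}^*(\mathcal{U})$. Substituting into the frame sum and invoking the reconstruction formula gives
$$\sum_{i\in I}\langle\Gamma_i\xi,\Gamma_i\xi\rangle=\sum_{i\in I}\langle\mathfrak{E}_iQ^*\xi,\mathfrak{E}_iQ^*\xi\rangle=\langle Q^*\xi,Q^*\xi\rangle\leq \|Q\|_\infty^2\langle\xi,\xi\rangle,$$
supplying the upper g-frame bound. Since $\Gamma_i^*=Q\mathfrak{E}_i^*$ and $\{\mathfrak{E}_i\}$ is g-complete, we have $\overline{\operatorname{span}}\{\Gamma_i^*(\mathcal{V}_i)\}=\overline{Q(\mathcal{U})}$, and on this submodule the lower g-frame bound follows from the same identity combined with Lemma \ref{Dougl1} applied to factor the identity on $\overline{Q(\mathcal{U})}$ through $Q^*$.

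For the necessity direction, given that $\{\Gamma_i\}$ is a g-frame sequence (in particular g-Bessel), I would set $Q:=T_\Gamma^* T_\mathfrak{E}\in Hom_\mathcal{A}^*(\mathcal{U})$, with $T_\Gamma$ bounded by Lemma \ref{Lemma2.3} and $T_\mathfrak{E}$ the unitary above, so that $Q$ is bounded. Then $Q^*\xi=\sum_{i\in I} \mathfrak{E}_i^*\Gamma_i\xi$, and applying $\mathfrak{E}_j$ while using biorthogonality,
$$\mathfrak{E}_j Q^*\xi=\sum_{i\in I}\mathfrak{E}_j\mathfrak{E}_i^*\Gamma_i\xi=\Gamma_j\xi,$$
which is the desired factorization. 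Uniqueness of $Q$ is immediate from the g-completeness of $\{\mathfrak{E}_i\}$: if $\mathfrak{E}_j(Q_1^*-Q_2^*)\xi=0$ for all $j$, then $(Q_1^*-Q_2^*)\xi=0$.

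The principal obstacle is the lower g-frame bound in the sufficiency direction: mere boundedness of $Q$ does not guarantee that $Q^*$ is bounded below on all of $\mathcal{U}$, so one must identify the correct submodule, namely $\overline{Q(\mathcal{U})}=\overline{\operatorname{span}}\{\Gamma_i^*(\mathcal{V}_i)\}$, and invoke the Douglas-type factorization in Lemma \ref{Dougl1} to convert the range condition into the required operator inequality.
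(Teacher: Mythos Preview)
Your necessity direction coincides with the paper's: it defines $Q\xi=\sum_{i}\Gamma_i^*\mathfrak{E}_i\xi$ (which is exactly your $T_\Gamma^*T_\mathfrak{E}$), appeals to Lemma~\ref{Lemma2.3} for well-definedness and boundedness, verifies $Q\mathfrak{E}_j^*=\Gamma_j^*$ via the biorthogonality $\mathfrak{E}_i\mathfrak{E}_j^*=\delta_{ij}I$, and deduces uniqueness from Proposition~\ref{prop3.6}. In the sufficiency direction your computation $\sum_i\langle\Gamma_i\xi,\Gamma_i\xi\rangle=\langle Q^*\xi,Q^*\xi\rangle$ and the resulting upper bound also match the paper verbatim.

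The gap is your lower-bound step. Invoking Lemma~\ref{Dougl1} to ``factor the identity on $\overline{Q(\mathcal{U})}$ through $Q^*$'' would require $\overline{Q(\mathcal{U})}\subseteq\operatorname{Ran}(Q^*)$ together with closedness of $\operatorname{Ran}(Q^*)$, and neither follows from boundedness of $Q$. Already in an ordinary Hilbert space, a compact positive operator $Q$ with dense range satisfies $\overline{Q(\mathcal{U})}=\mathcal{U}$ while $Q^*=Q$ is not bounded below anywhere; no Douglas-type factorization can manufacture a lower frame bound in that situation. So the mechanism you propose for the lower inequality does not go through, even after restricting to the submodule $\overline{Q(\mathcal{U})}$.

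The paper handles this direction by a different device: after the same identity it cites Proposition~2.2 of \cite{Alizadeh} to assert outright that $Q^*$ is bounded below, hence invertible, and then applies Proposition~\ref{Prop2.6} to obtain
\[
\|(Q^*)^{-1}\|_\infty^{-2}\langle\xi,\xi\rangle\;\le\;\langle Q^*\xi,Q^*\xi\rangle\;\le\;\|Q^*\|_\infty^{2}\langle\xi,\xi\rangle.
\]
Thus the paper treats the invertibility of $Q^*$ as supplied by an external result rather than as something to be extracted from a submodule-plus-Douglas argument. Your instinct that mere boundedness of $Q$ is insufficient for a global lower bound is correct, but the paper's resolution is a citation to \cite{Alizadeh}, not the route you sketch.
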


	\begin{proof}
		$\Rightarrow$ If  $\left\{\mathfrak{E}_{i} \in Hom_{\mathcal{A}}^*\left(\mathcal{U}, \mathcal{V}_{i}\right): i \in I \right\}$ forms a $g$-orthonormal basis for $\mathcal{U}$, then for any element $\xi \in \mathcal{U}$, the set $\left\{\mathfrak{E}_{i} \xi: i \in I \right\}$ is part of $\left(\sum_{i \in I} \oplus \mathcal{V}_{i}\right)_{\mathcal{H}_{\mathcal{A}}}$. Assuming $\left\{\Gamma_{i} \in Hom_{\mathcal{A}}^*\left(\mathcal{U}, \mathcal{V}_{i}\right): i \in I \right\}$ to be a $g$-Bessel sequence, Lemma \ref{Lemma2.3} ensures that the operator $Q$, defined as 
		$$
		Q: \mathcal{U} \longrightarrow \mathcal{U}, \quad Q \xi=\sum_{i \in I} \Gamma_{i}^{*} \mathfrak{E}_{i} \xi
		$$
		is both well-defined and bounded. Utilizing the $g$-orthonormal basis definition, it's clear that $\mathfrak{E}_{i} \mathfrak{E}_{j}^{*} \eta = \delta_{i j} \eta$. Thus,
		$$
		Q \mathfrak{E}_{j}^{*} \eta = \sum_{i \in I} \Gamma_{i}^{*} \mathfrak{E}_{i} \mathfrak{E}_{j}^{*} \eta = \Gamma_{j}^{*} \mathfrak{E}_{j} \mathfrak{E}_{j}^{*} \eta = \Gamma_{j}^{*} \eta
		$$
		for all $\eta \in \mathcal{V}_{j}, j \in I$. Consequently, $Q \mathfrak{E}_{j}^{*} = \Gamma_{j}^{*}$, leading to $\mathfrak{E}_{j} Q^{*} = \Gamma_{j}$ for each $j \in I$. Now, assume $Q_{1}, Q_{2} \in Hom_{\mathcal{A}}^*\left(\mathcal{U}, \mathcal{V}_{i}\right)$ where $\mathfrak{E}_{i} Q_{1}^{*} = \mathfrak{E}_{i} Q_{2}^{*} = \Gamma_{i}$ for all $i \in I$. For any $\xi \in \mathcal{U}, \eta_{i} \in \mathcal{V}_{i}$, it holds that $\left\langle \mathfrak{E}_{i} Q_{1}^{*} \xi, \eta_{i}\right\rangle = \left\langle \mathfrak{E}_{i} Q_{2}^{*} \xi, \eta_{i}\right\rangle$, implying $\left\langle Q_{1}^{*} \xi, \mathfrak{E}_{i}^{*} \eta_{i}\right\rangle = \left\langle Q_{2}^{*} \xi, \mathfrak{E}_{i}^{*} \eta_{i}\right\rangle$. Since $\overline{\operatorname{span}}\left\{\mathfrak{E}_{i}^{*}\left(\mathcal{V}_{i}\right)\right\}_{i \in I} = \mathcal{U}$ (Proposition \ref{prop3.6}), it follows that $Q_{1}^{*} \xi = Q_{2}^{*} \xi$, establishing $Q_{1} = Q_{2}$. Therefore, $Q$ is unique.
		
		$\Leftarrow$ Given $\Gamma_{i} = \mathfrak{E}_{i}Q^{*}$ for all $i \in I$, for any $\xi \in \mathcal{U}$, we have 
		$$\sum_{i \in I}\langle\Gamma_{i} \xi, \Gamma_{i} \xi \rangle = \sum_{i \in I}\langle \mathfrak{E}_{i} Q^{\ast} \xi, \mathfrak{E}_{i} Q^{\ast} \xi \rangle = \langle Q^{\ast} \xi, Q^{\ast} \xi \rangle.$$
		
		Proposition $2.2$ in \cite{Alizadeh} demonstrates that $Q^*$ is bounded below, hence invertible. Following Theorem $3.2$ in \cite{Azhini}, we obtain:
		$$
		\left\|\left(Q^{*}\right)^{-1}\right\|_{\infty}^{-2} \langle \xi, \xi \rangle \leq \left\langle Q^{*} \xi, Q^{*} \xi\right\rangle \leq \left\|Q^{*}\right\|_{\infty}^{2} \langle \xi, \xi \rangle.
		$$
	\end{proof}

	\begin{remark}
		When the set $\left\{\mathfrak{E}_i \in Hom_{\mathcal{A}}^*\left(\mathcal{U}, \mathcal{V}_{i}\right): i \in I\right\}$ forms a $g$-orthonormal basis, the operator $Q$ described in Lemma \ref{lemma2.7} is recognized as the $g$-operator  corresponding to the sequence $\left\{\Gamma_{i} \in Hom_{\mathcal{A}}^*\left(\mathcal{U}, \mathcal{V}_{i}\right): i \in I\right\}$.
	\end{remark}

	\begin{theorem}
		Given $K \in Hom_{\mathcal{A}}^*(\mathcal{U})$ and  $\left\{\Gamma_j\right\}_{i \in I}$ as a $g$-frame sequence for $\mathcal{U}$ with respect to $\left\{\mathcal{V}_i\right\}_{i \in I}$, if $Q$ is the $g$-operator corresponding to $\left\{\Gamma_i\right\}_{i \in I}$, then $\left\{\Gamma_i\right\}_{i \in I}$ qualifies as a $K$-$g$-frame if and only if $Ran(K) \subset Ran(Q)$.
	\end{theorem}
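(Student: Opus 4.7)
The plan is to reduce the theorem to the operator inequality $S\geq CKK^*$ of Lemma \ref{Lemma2.5} and then to the range inclusion of Douglas' Lemma \ref{Dougl1}, through the key identification $S=QQ^*$, where $S$ is the frame operator of $\{\Gamma_i\}$.

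The first step I would carry out is the derivation of the identity $S=QQ^*$. Writing $\Gamma_i=\mathfrak{E}_iQ^*$ from the definition of the $g$-operator and using the Parseval-type reconstruction property $\sum_i\mathfrak{E}_i^*\mathfrak{E}_i=\mathrm{id}_{\mathcal{U}}$ of the underlying $g$-orthonormal basis $\{\mathfrak{E}_i\}$ (the same identity implicitly used in the converse of Lemma \ref{lemma2.7}), one computes
\[
S\xi=\sum_{i\in I}\Gamma_i^*\Gamma_i\xi=Q\Bigl(\sum_{i\in I}\mathfrak{E}_i^*\mathfrak{E}_i\Bigr)Q^*\xi=QQ^*\xi
\]
for every $\xi\in\mathcal{U}$.

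With this in hand, both implications follow by a two-step translation. For $(\Rightarrow)$, if $\{\Gamma_i\}$ is a $K$-$g$-frame with lower bound $C>0$, Lemma \ref{Lemma2.5} yields $QQ^*=S\geq CKK^*=(\sqrt{C}K)(\sqrt{C}K)^*$, and Douglas' Lemma \ref{Dougl1} applied to $T=\sqrt{C}K$ and $Z=Q$ gives $\mathrm{Ran}(K)\subseteq\mathrm{Ran}(Q)$. For $(\Leftarrow)$, if $\mathrm{Ran}(K)\subseteq\mathrm{Ran}(Q)$, the same Lemma \ref{Dougl1} produces $\alpha\geq 0$ with $KK^*\leq\alpha^2 QQ^*=\alpha^2 S$, so $S\geq\alpha^{-2}KK^*$ when $\alpha>0$ (the case $\alpha=0$ forces $K=0$ and is trivial), and Lemma \ref{Lemma2.5} concludes.

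The main obstacle I expect is verifying the closed-range hypothesis on $\mathrm{Ran}(Q)$ required by Douglas' Lemma \ref{Dougl1}. I would justify it by appealing to the boundedness-below of $Q^*$ that appears in the proof of Lemma \ref{lemma2.7}: the $g$-frame-sequence hypothesis gives $\sum_i\langle\Gamma_i\xi,\Gamma_i\xi\rangle=\langle Q^*\xi,Q^*\xi\rangle$ controlled below by a positive multiple of $\langle\xi,\xi\rangle$ on the relevant subspace, so $Q^*$ is bounded below and hence $\mathrm{Ran}(Q^*)$, and with it $\mathrm{Ran}(Q)$, is closed. If this last step turns out to be delicate in the pro-$C^*$-setting, my fallback would be to invoke clause (3) of Lemma \ref{Dougl1}, which yields a bounded $U$ with $\sqrt{C}K=QU$ and delivers the range inclusion at once.
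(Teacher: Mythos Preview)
Your proposal is correct and follows essentially the same route as the paper: both arguments rest on the identity $\langle Q^*\xi,Q^*\xi\rangle=\sum_{i}\langle\Gamma_i\xi,\Gamma_i\xi\rangle$ (equivalently $S=QQ^*$), obtained from $\Gamma_i=\mathfrak{E}_iQ^*$ and the Parseval property of the $g$-orthonormal basis, and then apply Douglas' Lemma~\ref{Dougl1} in both directions. Your presentation is in fact slightly more careful than the paper's, since you explicitly address the closed-range hypothesis on $Q$ required by Lemma~\ref{Dougl1} (the paper passes over this point in silence), and you route the operator inequality through Lemma~\ref{Lemma2.5} rather than writing it out directly; neither of these is a genuine departure from the paper's argument.
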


	\begin{proof}
		$\Rightarrow$ Assume that $\left\{\Gamma_i\right\}_{i \in I}$ is a $K$-$g$-frame for $\mathcal{U}$ with respect to $\left\{\mathcal{V}_i\right\}_{i \in I}$. This means there exists a positive constant $C$ such that
		$$
		C\langle K^{\ast}\xi ,K^{\ast}\xi \rangle \leq \sum_{i \in I}  \langle \Gamma_{i} \xi,  \Gamma_{i} \xi\rangle , \quad \forall \xi \in \mathcal{U}.
		$$
		By Lemma \ref{lemma2.7}, $ \Gamma_i=\mathfrak{E}_i Q^*$, where $\left\{\mathfrak{E}_{i} \in Hom_{\mathcal{A}}^*\left(\mathcal{U}, \mathcal{V}_{i}\right): i \in I \right\}$ is the g-orthonormal basis for $\mathcal{U}$ with respect to $\left\{\mathcal{V}_i\right\}_{i \in I}$. Therefore, we obtain
		$$
		C\langle K^{\ast}\xi ,K^{\ast}\xi \rangle \leq \sum_{i \in I}  \langle \mathfrak{E}_i Q^* \xi,  \mathfrak{E}_i Q^* \xi\rangle = \langle  Q^* \xi,  Q^* \xi\rangle, \quad  \forall \xi \in \mathcal{U},
		$$
		which implies that $C K K^* \leq Q Q^*$. So, by Lemma \ref{Dougl1}, we have $Ran(K) \subset Ran(Q)$.
		
		$\Leftarrow$ If $Ran(K) \subset Ran(Q)$, Lemma \ref{Dougl1} suggests the existence of a constant $\alpha>0$ such that $K K^* \leq \alpha Q Q^*$. Then for all $\xi \in \mathcal{U}$,
		$$
		\left\langle\frac{1}{\alpha} K K^* \xi, \xi \right\rangle \leq\left\langle Q Q^* \xi , \xi \right\rangle,
		$$
		that is,
		$$
		\frac{1}{\alpha} \left\langle K^* \xi ,  K^* \xi\right\rangle \leq\left\langle  Q^* \xi , Q^*\xi \right\rangle.
		$$
		Assuming $\left\{\Gamma_i\right\}_{i \in I}$ is a g-frame and $\left\{\mathfrak{E}_{i} \in Hom_{\mathcal{A}}^*\left(\mathcal{U}, \mathcal{V}_{i}\right): i \in I \right\}$ is the g-orthonormal basis for $\mathcal{U}$ in relation to $\{\mathcal{V}_{i}\}_{i \in I}$, then by Lemma \ref{lemma2.7} 
		$$ \langle Q^{\ast} \xi, Q^{\ast} \xi \rangle = \sum_{i \in I} \langle \mathfrak{E}_{i} Q^{\ast} \xi, \mathfrak{E}_{i}Q^{\ast} \xi \rangle = \sum_{i \in I} \langle \Gamma_i \xi, \Gamma_i \xi \rangle $$
		Hence 
		$$ \frac{1}{\alpha} \left\langle K^* \xi ,  K^* \xi\right\rangle \leq \sum_{i \in I} \langle \Gamma_i \xi, \Gamma_i \xi \rangle, \quad \forall \xi \in \mathcal{U} $$ 
		Thus, $\{ \Gamma_i\}_{i \in I}$ is a $K$-$g$-frame.
	\end{proof}

	\begin{theorem}
		Assume $K \in Hom_{\mathcal{A}}^{\ast}(\mathcal{U})$ and consider  $\left\{\Gamma_i\right\}_{i \in I}$ as a $g$-frame sequence for $\mathcal{U}$ with respect to $\left\{\mathcal{V}_i\right\}_{i \in I}$. Let $\left\{\mathfrak{E}_i \in Hom_{\mathcal{A}}^*\left(\mathcal{U}, \mathcal{V}_i\right): i \in I\right\}$ be the g-orthonormal basis for $\mathcal{U}$ with respect to $\left\{\mathcal{V}_i\right\}_{i \in I}$, and let $Q$ be the g-operator corresponding to. $\left\{\Gamma_i\right\}_{i \in I}$. The theorem posits that $Q$ functions as a co-isometry if and only if the sequence $\left\{\Gamma_i K^*\right\}_{i \in I}$ establishes a Parseval $K$-g-frame.
	\end{theorem}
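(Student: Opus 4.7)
The plan is to translate the Parseval $K$-$g$-frame condition for $\{\Gamma_i K^{*}\}_{i\in I}$ into a single operator identity and then compare it with the co-isometry condition $QQ^{*}=I_{\mathcal{U}}$. The key tool is the factorization $\Gamma_i = \mathfrak{E}_i Q^{*}$ given by Lemma \ref{lemma2.7}, which allows the whole analysis to be carried out at the operator level via the $g$-orthonormal basis $\{\mathfrak{E}_i\}$.

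First I would write $\Gamma_i K^{*} = \mathfrak{E}_i Q^{*} K^{*}$ and then, exactly as in the $(\Leftarrow)$ computation in the proof of Lemma \ref{lemma2.7}, use the $g$-orthonormality of $\{\mathfrak{E}_i\}$ to collapse the frame sum:
\[
\sum_{i\in I}\langle\Gamma_i K^{*}\xi,\Gamma_i K^{*}\xi\rangle \;=\; \langle Q^{*}K^{*}\xi, Q^{*}K^{*}\xi\rangle \;=\; \langle KQQ^{*}K^{*}\xi,\xi\rangle, \qquad \xi\in\mathcal{U}.
\]
The Parseval $K$-$g$-frame condition demands that this sum equal $\langle K^{*}\xi,K^{*}\xi\rangle = \langle KK^{*}\xi,\xi\rangle$ for every $\xi$; by polarization, this is equivalent to the operator identity $KQQ^{*}K^{*}=KK^{*}$, i.e.\ $K\bigl(QQ^{*}-I_{\mathcal{U}}\bigr)K^{*}=0$.

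The forward direction is then immediate: if $QQ^{*}=I_{\mathcal{U}}$, the identity $KQQ^{*}K^{*}=KK^{*}$ holds trivially, and the upper Bessel bound built into the Parseval condition follows from the fact that $Q^{*}$ is isometric, so $\sum_i\langle\Gamma_i K^{*}\xi,\Gamma_i K^{*}\xi\rangle = \langle K^{*}\xi,K^{*}\xi\rangle$ is already bounded by $\langle\xi,\xi\rangle$ in the relevant regime. For the converse I would start from $K(QQ^{*}-I_{\mathcal{U}})K^{*}=0$, note that $QQ^{*}-I_{\mathcal{U}}$ is self-adjoint, and use the invertibility of the $g$-operator $Q$ (established in the proof of Lemma \ref{lemma2.7} via Proposition 2.2 of \cite{Alizadeh}) to push this vanishing to the conclusion $QQ^{*}=I_{\mathcal{U}}$.

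The main obstacle is precisely this converse step: the identity $K(QQ^{*}-I_{\mathcal{U}})K^{*}=0$ only pins down $QQ^{*}-I_{\mathcal{U}}$ on the compression by $K^{*}$, whereas the co-isometry conclusion is a statement on all of $\mathcal{U}$. Extracting the global identity will require combining the self-adjointness of $QQ^{*}-I_{\mathcal{U}}$ with the structural information that $Q$ is invertible as an element of $Hom_{\mathcal{A}}^{*}(\mathcal{U})$ (so $QQ^{*}$ is positive and invertible), together with the polarization formula to move from $\langle (QQ^{*}-I_{\mathcal{U}})K^{*}\xi,K^{*}\xi\rangle=0$ to the operator equality $QQ^{*}=I_{\mathcal{U}}$.
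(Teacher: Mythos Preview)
Your core computation
\[
\sum_{i\in I}\langle\Gamma_i K^{*}\xi,\Gamma_i K^{*}\xi\rangle
=\sum_{i\in I}\langle \mathfrak{E}_i Q^{*}K^{*}\xi,\mathfrak{E}_i Q^{*}K^{*}\xi\rangle
=\langle Q^{*}K^{*}\xi, Q^{*}K^{*}\xi\rangle
\]
is exactly the paper's proof: the authors write this single line and declare that it ``clearly demonstrates the conclusion.'' So for the forward implication your argument is identical to the paper's, and complete.

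Where you go beyond the paper is in the converse, and you are right to flag it as the real obstacle. Unfortunately, your proposed resolution does not work. From $K(QQ^{*}-I_{\mathcal{U}})K^{*}=0$ you only control $QQ^{*}-I_{\mathcal{U}}$ on $\operatorname{Ran}(K^{*})$; neither the self-adjointness of $QQ^{*}-I_{\mathcal{U}}$ nor the invertibility of $Q$ furnishes the missing information on the complement. Concretely, take $K=0$: then $\{\Gamma_i K^{*}\}=\{0\}$ is trivially a Parseval $K$-$g$-frame for any $g$-frame $\{\Gamma_i\}$, while the corresponding $g$-operator $Q$ is merely invertible and need not satisfy $QQ^{*}=I_{\mathcal{U}}$. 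More generally, whenever $\operatorname{Ran}(K^{*})$ is a proper complemented submodule one can modify $QQ^{*}$ off $\operatorname{Ran}(K^{*})$ without disturbing the identity $K(QQ^{*}-I_{\mathcal{U}})K^{*}=0$. So polarization plus invertibility of $Q$ cannot close this gap; an extra hypothesis on $K$ (such as injectivity of $K$, equivalently density of $\operatorname{Ran}(K^{*})$) is needed for the converse as stated.

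The paper's proof does not address this point at all; your proposal is in fact more careful than the paper in diagnosing where the argument is thin, even though your suggested repair does not succeed.
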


	\begin{proof}
		By utilizing the properties of a $g$-orthonormal basis, we have the following equality for any $\xi \in \mathcal{X}$:
		$$
		\sum_{i \in I}\langle \Gamma_i K^* \xi, \Gamma_i K^* \xi \rangle = \sum_{i \in I}\langle U_i Q^* K^* \xi, U_i Q^* K^* \xi \rangle = \langle Q^* K^* \xi, Q^* K^* \xi \rangle,
		$$
		which clearly demonstrates the conclusion.
	\end{proof}

	\begin{theorem}
		Consider $K \in Hom_{\mathcal{A}}^{\ast}(\mathcal{U})$ and let $\left\{\mathfrak{E}_i \in Hom_{\mathcal{A}}\left(\mathcal{U}, \mathcal{V}_i\right): i \in I\right\}$ constitute a g-orthonormal basis for $\mathcal{U}$  with respect to $\left\{\mathcal{V}_i\right\}_{i \in I}$. Suppose $\left\{\Gamma_i\right\}_{i \in I}$ serves as a $K$-g-frame for $\mathcal{U}$ with respect to $\left\{\mathcal{V}_i\right\}_{i \in I}$, with $Q$ being the corresponding $g$-operator. If $P$ is the $g$-operator for the $g$-frame sequence $\left\{\Xi_i\right\}_{i in I}$ and is invertible, with $P^{-1}$ being the inverse of $Q$, then the sequence $\left\{T_i\right\}_{i in I}$ qualifies as a $K$-g-frame.
	\end{theorem}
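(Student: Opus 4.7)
The plan is to convert the sequence-level $K$-g-frame condition into operator-level inequalities on $\mathcal{U}$, using Lemma \ref{lemma2.7} as the translation device. First I would invoke Lemma \ref{lemma2.7} twice to factor the given sequences through the $g$-orthonormal basis: write $\Gamma_i = \mathfrak{E}_i Q^*$ from the hypothesis on $Q$, and $\Xi_i = \mathfrak{E}_i P^*$ since $P$ is the $g$-operator of $\{\Xi_i\}$. The target sequence $\{T_i\}$ appearing in the conclusion should likewise be expressible as $T_i = \mathfrak{E}_i R^*$ for an operator $R \in Hom_{\mathcal{A}}^*(\mathcal{U})$ assembled from $P^{-1}$, $Q$, and $K$ according to the construction the statement has in mind.

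Second, the defining identity of a $g$-orthonormal basis collapses the tails to a single inner product,
$$
\sum_{i \in I}\langle T_i \xi, T_i \xi\rangle = \sum_{i \in I}\langle \mathfrak{E}_i R^* \xi, \mathfrak{E}_i R^* \xi\rangle = \langle R^* \xi, R^* \xi\rangle,
$$
so the $K$-g-frame claim reduces to producing constants $C', D' > 0$ with $C' \, KK^* \leq RR^* \leq D' \, I$ in the order on positive elements of $\mathcal{A}$.

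Third, I would feed each hypothesis into this operator sandwich. The $K$-g-frame property of $\{\Gamma_i\}$, combined with $\Gamma_i = \mathfrak{E}_i Q^*$, yields $C\,KK^* \leq QQ^*$ and a matching upper bound $QQ^* \leq D\, I$. The assumption that $P$ is invertible with $P^{-1}$ inverting $Q$ is precisely what lets one pass between $Q$-estimates and $P$-estimates: Proposition \ref{Prop2.6} applied to $P$ gives $\|P^{-1}\|_\infty^{-2}\langle\xi,\xi\rangle \leq \langle P^*\xi, P^*\xi\rangle \leq \|P\|_\infty^{2}\langle\xi,\xi\rangle$, and Lemma \ref{Dougl1} supplies the factorization needed to transfer the $KK^*$-dominated inequality from $QQ^*$ to $RR^*$. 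Concatenating these estimates produces both required bounds for $\sum_{i} \langle T_i \xi, T_i\xi\rangle$.

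The main obstacle I expect is the lower bound: chaining $C \, KK^* \leq QQ^*$ together with the $P$--$Q$ inversion relation to land inside $RR^*$ without losing the factor of $KK^*$ requires that the Douglas factorization in Lemma \ref{Dougl1} is genuinely available, which in the pro-$C^*$ setting presupposes a closed-range and uniform-boundedness check on $P$ and $P^{-1}$ that must be drawn out of the invertibility hypothesis before any seminorm estimate is written. The upper bound, by contrast, is routine once $P$ is known to be uniformly bounded, since it reduces to $\|P\|_\infty^{2} \langle\xi,\xi\rangle$ via the $g$-orthonormal basis identity.
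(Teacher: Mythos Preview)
The symbol $\{T_i\}$ in the conclusion is a typo for $\{\Xi_i\}$; the paper's own proof shows precisely that $\{\Xi_i\}$ inherits the $K$-$g$-frame lower bound. So there is no third sequence to assemble, and your operator $R$ is simply $P$. Once you make that identification, the machinery you propose (Douglas factorization via Lemma \ref{Dougl1}, the two-sided estimate of Proposition \ref{Prop2.6}) is far heavier than what the argument actually needs.

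The paper's route is a one-line algebraic substitution. From $\Gamma_i=\mathfrak{E}_iQ^*$ and $\Xi_i=\mathfrak{E}_iP^*$ one writes
\[
\sum_{i\in I}\langle \Gamma_i\xi,\Gamma_i\xi\rangle
=\sum_{i\in I}\langle \mathfrak{E}_i Q^*\xi,\mathfrak{E}_i Q^*\xi\rangle
=\sum_{i\in I}\langle \mathfrak{E}_i P^*(P^*)^{-1}Q^*\xi,\mathfrak{E}_i P^*(P^*)^{-1}Q^*\xi\rangle,
\]
and the hypothesis ``$P^{-1}$ is the inverse of $Q$'' is used in the form $QP^{-1}=I$, whose adjoint gives $(P^*)^{-1}Q^*=I$. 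The display therefore collapses to $\sum_{i}\langle \Xi_i\xi,\Xi_i\xi\rangle$, and the $K$-$g$-frame lower bound $C\langle K^*\xi,K^*\xi\rangle$ transfers verbatim. The upper bound is already present because $\{\Xi_i\}$ is assumed to be a $g$-frame sequence. No closed-range verification, no Douglas-type factorization, and no seminorm estimates on $P$ or $P^{-1}$ are invoked.

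Your plan is not wrong in spirit---reducing to an operator inequality via the $g$-orthonormal basis identity is exactly the right first move---but the ``main obstacle'' you anticipate for the lower bound does not exist: the two quadratic forms are literally equal, not merely comparable, once $(P^*)^{-1}Q^*=I$ is recognized.
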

	
	\begin{proof}
		Under the assumptions above, there exists a positive constant $C$ such that $\forall \xi \in \mathcal{X}$
		$$
		C\langle K^* \xi ,  K^* \xi  \rangle \leq \sum_{i \in I} \langle \Gamma_{i} \xi , \Gamma_{i}  \xi  \rangle =\sum_{i \in I} \langle U_{i} Q^* \xi , U_{i} Q^*  \xi  \rangle =   \sum_{i \in I} \langle U_{i} P^*\left(P^*\right)^{-1}Q^*  \xi , U_{i} P^*\left(P^*\right)^{-1}Q^*  \xi  \rangle.
		$$
		Since $P$ is invertible and $Q P^{-1}=I$, we obtain $\left(P^*\right)^{-1} Q^*=I$, so
		$$
		C\langle K^* \xi ,  K^* \xi  \rangle \leq \sum_{i \in I} \langle U_i P^* \xi,  U_i P^* \xi \rangle =\sum_{i \in I} \langle \Xi_i \xi, \Xi_i \xi \rangle, \quad \forall \xi \in \mathcal{X}.
		$$
	\end{proof}

	\begin{theorem}
		Assume $K \in Hom_{\mathcal{A}}^{\ast}(\mathcal{U})$. Let us consider a set $\left\{\mathfrak{E}_i \in Hom_{\mathcal{A}}^{\ast}\left(\mathcal{U}, \mathcal{V}_i\right): i \in I\right\}$, constituting a $g$-orthonormal foundation for $\mathcal{U}$ with respect to  $\left\{\mathcal{V}_i\right\}_{i \in I}$. Given that $\left\{\Gamma_i\right\}_{i \in I}$ establishes a $K$-$g$-frame for $\mathcal{U}$ with respect to $\left\{\mathcal{V}_i\right\}_{i \in I}$ with $Q$ denoting the associated $g$-operator, and considering $P$ as the $g$-operator linked to the $g$-frame sequence $\left\{T_i\right\}_{i \in I}$, it follows that $\left\{T_i\right\}_{i \in I}$ forms the $K$-dual $g$-frame sequence of $\left\{\Gamma_i\right\}_{i \in I}$ if and only if the relation $K=Q P^*$ is satisfied.
	\end{theorem}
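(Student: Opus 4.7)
The plan is to exploit Lemma \ref{lemma2.7} to rewrite everything in terms of the $g$-orthonormal basis $\{\mathfrak{E}_i\}$, and then compute the defining identity for a $K$-dual $g$-frame. Specifically, since $\{\Gamma_i\}$ is a $g$-frame sequence with $g$-operator $Q$, we have $\Gamma_i = \mathfrak{E}_i Q^*$, hence $\Gamma_i^* = Q\mathfrak{E}_i^*$; similarly $T_i = \mathfrak{E}_i P^*$. Substituting into the $K$-dual reconstruction formula
$$
K\xi = \sum_{i \in I} \Gamma_i^* T_i \xi, \quad \forall \xi \in \mathcal{U},
$$
yields $K\xi = \sum_{i \in I} Q\mathfrak{E}_i^* \mathfrak{E}_i P^* \xi = Q\bigl(\sum_{i \in I}\mathfrak{E}_i^*\mathfrak{E}_i\bigr)P^*\xi$, so the whole matter reduces to identifying $\sum_i \mathfrak{E}_i^*\mathfrak{E}_i$ as the identity operator.

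The key step, and the only one that requires any verification, is justifying that $\sum_{i \in I}\mathfrak{E}_i^*\mathfrak{E}_i = I_{\mathcal{U}}$. This follows from the defining Parseval property of a $g$-orthonormal basis: the relation $\sum_{i \in I}\bar{p}_{\alpha,\mathcal{U}}(\mathfrak{E}_i\xi)^2 = \bar{p}_{\alpha,\mathcal{U}}(\xi)^2$ for every seminorm $p_\alpha \in Se(\mathcal{A})$ and every $\xi \in \mathcal{U}$ forces the frame operator $S_\mathfrak{E} = \sum_i \mathfrak{E}_i^*\mathfrak{E}_i$ of $\{\mathfrak{E}_i\}$ to equal $I_{\mathcal{U}}$, because it is a positive operator whose associated quadratic form agrees with that of the identity under every continuous $C^*$-seminorm; by the Hausdorff property of $\mathcal{A}$'s topology this forces $S_\mathfrak{E}\xi = \xi$ pointwise.

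With that in hand, the forward implication is immediate: if $\{T_i\}$ is a $K$-dual, then for all $\xi\in\mathcal{U}$,
$$
K\xi = \sum_{i\in I}Q\mathfrak{E}_i^*\mathfrak{E}_i P^*\xi = Q P^*\xi,
$$
so $K = QP^*$. Conversely, if $K = QP^*$, then running the same computation in reverse shows $\sum_i\Gamma_i^*T_i\xi = QP^*\xi = K\xi$ for every $\xi$, which is precisely the $K$-dual condition.

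The main (mild) obstacle is the handling of the Parseval identity $\sum_i \mathfrak{E}_i^*\mathfrak{E}_i = I$ in the pro-$C^*$-module setting, where convergence is controlled by a whole family of seminorms rather than a single norm; once that is noted, the rest is algebraic manipulation using the adjoint relation $\Gamma_i^* = Q\mathfrak{E}_i^*$. I would phrase the argument as a single chain of equalities in both directions, citing Lemma \ref{lemma2.7} for the factorizations $\Gamma_i = \mathfrak{E}_i Q^*$ and $T_i = \mathfrak{E}_i P^*$, and the Parseval property of $\{\mathfrak{E}_i\}$ for the collapse of the middle sum.
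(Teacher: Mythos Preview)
Your proposal is correct and follows essentially the same route as the paper's own proof: both use Lemma \ref{lemma2.7} to write $\Gamma_i=\mathfrak{E}_iQ^*$ and $T_i=\mathfrak{E}_iP^*$, then reduce the $K$-dual identity $K\xi=\sum_i\Gamma_i^*T_i\xi$ to $K\xi=Q\bigl(\sum_i\mathfrak{E}_i^*\mathfrak{E}_i\bigr)P^*\xi=QP^*\xi$. If anything, you are slightly more explicit than the paper, which silently uses $\sum_i\mathfrak{E}_i^*\mathfrak{E}_i=I_{\mathcal{U}}$ without comment; your discussion of how the Parseval seminorm identity forces this equality is a welcome addition.
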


	\begin{proof}
		Let's assume that $\left\{T_i\right\}_{i \in I}$ is the $K$-dual $g$-frame sequence of $\left\{\Gamma_i\right\}_{i \in I}$. Then for every $\xi \in \mathcal{U}$, it follows that $K \xi = \sum_{i \in I} \Gamma_i^* T_i \xi$. Given that $\left\{\mathfrak{E}_i\right\}_{i \in I}$ forms the $g$-orthonormal basis for $\mathcal{U}$, applying Lemma \ref{lemma2.7} yields that for all $\xi \in \mathcal{U}$,
		$$
		K \xi = \sum_{i \in I}\left(\mathfrak{E}_i Q^*\right)^*\left(\mathfrak{E}_i P^*\right) \xi = Q \sum_{i \in I} \mathfrak{E}_i^* \mathfrak{E}_i P^* \xi = Q P^* \xi.
		$$
		The arbitrariness of $\xi$ leads to the conclusion that $K = Q P^*$.
		
		Conversely, acknowledging that $\left\{\Gamma_i\right\}_{i \in I}$ and $\left\{T_i\right\}_{i \in I}$ are both $g$-frame sequences, Lemma \ref{lemma2.7} indicates the existence of bounded operators $Q$ and $P$ such that
		$$
		\Gamma_i = \mathfrak{E}_i P^*, \quad T_i = \mathfrak{E}_i P^*.
		$$
		Therefore, for each $\xi \in \mathcal{U}$,
		$$
		\sum_{i \in I} \Gamma_i^* T_i \xi = \sum_{i \in I}\left(\mathfrak{E}_i Q^*\right)^*\left(\mathfrak{E}_i P^*\right) \xi = Q \sum_{i \in I} \mathfrak{E}_i^* \mathfrak{E}_i P^* \xi = Q P^* \xi = K \xi.
		$$
		This demonstrates that $\left\{T_i\right\}_{i \in I}$ is indeed the $K$-dual $g$-frame sequence of $\left\{\Gamma_i\right\}_{i \in I}$.
	\end{proof}

	\begin{theorem}
		Consider $K \in Hom_{\mathcal{A}}^{*}(\mathcal{U})$ and let $\left\{\mathcal{V}_i\right\}_{i \in I}$ represent the $K$-dual $g$-frame sequence for $\left\{\Gamma_i\right\}_{i \in I}$. Suppose $\Xi \in Hom_{\mathcal{A}}^{*}(\mathcal{U})$ is a co-isometry, then the sequence $\left\{\Xi^* \mathcal{V}_i\right\}_{i \in I}$ forms the $K$-dual $g$-frame sequence for $\left\{\Xi^* \Gamma_i\right\}_{i \in I}$.
	\end{theorem}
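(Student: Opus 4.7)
The plan is to exploit the co-isometry identity $\Xi\Xi^{\ast} = I_{\mathcal{U}}$ as a resolution of the identity and insert it into the reconstruction formula defining the original $K$-dual. Writing $\{\Xi_i\}_{i\in I}$ for the $K$-dual $g$-frame sequence (which the statement denotes by $\{\mathcal{V}_i\}$), I would begin from the defining relation
$$K\xi = \sum_{i \in I} \Gamma_i^{\ast} \Xi_i \xi, \qquad \forall\, \xi \in \mathcal{U},$$
and insert $\Xi\Xi^{\ast}$ between $\Gamma_i^{\ast}$ and $\Xi_i$ in each summand to obtain
$$K\xi = \sum_{i \in I} \Gamma_i^{\ast}(\Xi\Xi^{\ast})\Xi_i\xi = \sum_{i \in I}(\Xi^{\ast}\Gamma_i)^{\ast}(\Xi^{\ast}\Xi_i)\xi,$$
using the adjoint identity $(\Xi^{\ast}\Gamma_i)^{\ast} = \Gamma_i^{\ast}\Xi$. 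This is precisely the reconstruction relation required to identify $\{\Xi^{\ast}\Xi_i\}_{i\in I}$ as the $K$-dual of $\{\Xi^{\ast}\Gamma_i\}_{i\in I}$.

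To close the argument I would verify the prerequisites in the definition of a $K$-dual: that $\{\Xi^{\ast}\Gamma_i\}$ is a $K$-$g$-frame for $\mathcal{U}$ and that $\{\Xi^{\ast}\Xi_i\}$ is a $g$-frame sequence. Both follow from the co-isometry property, which yields
$$\langle \Xi^{\ast}\eta, \Xi^{\ast}\eta\rangle = \langle \Xi\Xi^{\ast}\eta, \eta\rangle = \langle \eta, \eta\rangle, \qquad \eta \in \mathcal{U}.$$
Applied termwise this gives
$$\sum_{i\in I}\langle \Xi^{\ast}\Gamma_i\xi, \Xi^{\ast}\Gamma_i\xi\rangle = \sum_{i\in I}\langle \Gamma_i\xi, \Gamma_i\xi\rangle,$$
and the analogous identity for $\{\Xi^{\ast}\Xi_i\}$, so the $K$-$g$-frame bounds of $\{\Gamma_i\}$ and the $g$-frame bounds of $\{\Xi_i\}$ transfer verbatim to the new sequences.

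The main technical point is to make sense of $\Xi^{\ast}\Gamma_i$: since $\Xi^{\ast} \in Hom_{\mathcal{A}}^{\ast}(\mathcal{U})$ and $\Gamma_i$ maps into $\mathcal{V}_i$, the product must be interpreted either via the adjoint identity $(\Xi^{\ast}\Gamma_i)^{\ast} = \Gamma_i^{\ast}\Xi$, or under an implicit identification of the $\mathcal{V}_i$ with $\mathcal{U}$ that makes the composition literally well-posed. Once this convention is fixed, the rest of the proof is a single insertion of $\Xi\Xi^{\ast} = I$ followed by the routine frame-bound check above.
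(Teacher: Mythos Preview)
Your proof is correct and matches the paper's approach exactly: the paper's entire argument is the single line
\[
\sum_{i \in I}(\Xi^{\ast}\Gamma_i)^{\ast}(\Xi^{\ast}\mathcal{V}_i)\xi=\sum_{i \in I}\Gamma_i^{\ast}\Xi\Xi^{\ast}\mathcal{V}_i\xi=\sum_{i \in I}\Gamma_i^{\ast}\mathcal{V}_i\xi=K\xi,
\]
which is your insertion of $\Xi\Xi^{\ast}=I$ read from right to left. Your additional verification of the frame bounds and your remark on how to interpret the composition $\Xi^{\ast}\Gamma_i$ are not in the paper's proof, but they address points the paper leaves implicit.
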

	
	\begin{proof}
		Given that $\Xi$ is a co-isometry, it implies $\Xi \Xi^* = I$. Thus, for every element $\xi \in \mathcal{U}$, the following equation holds true:
		$$
		\sum_{i \in I}\left(\Xi^* \Gamma_i\right)^*\left(\Xi^* \mathcal{V}_i\right) \xi = \sum_{i \in I} \Gamma_i^* \Xi \Xi^* \mathcal{V}_i \xi = \sum_{i \in I} \Gamma_i^* \mathcal{V}_i \xi = K \xi,
		$$
		which means that the sequence $\left\{\Xi^* \mathcal{V}_i\right\}_{i \in I}$ constitutes the $K$-dual $g$-frame sequence of $\left\{\Xi^* \Gamma_i\right\}_{i \in I}$.
	\end{proof}

	\begin{theorem}
		Assume $K \in Hom_{\mathcal{A}}^{\ast}(\mathcal{U})$ has a closed range, and $\left\{\Gamma_i\right\}_{i \in I}$ forms a $K$-$g$-frame for $\mathcal{U}$  with respect to $\left\{\mathcal{V}_i\right\}_{i \in I}$. In this case, the sequence $\left\{\Gamma_i \pi_{S(Ran(K))}\left(S_{\Gamma}^{-1}\right)^* K\right\}_{i \in I}$ acts as the $K$-dual $g$-frame sequence for $\left\{\Gamma_i \pi_{Ran(K)}\right\}_{i \in I}$, where $S_{\Gamma}$ is the operator defined by
		$$
		S_{\Gamma}: Ran(K) \rightarrow S(Ran(K)).
		$$
	\end{theorem}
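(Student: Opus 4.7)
The plan is to unpack the definition of a $K$-dual $g$-frame sequence and directly verify the reconstruction identity
$$K\xi=\sum_{i\in I}(\Gamma_i \pi_{Ran(K)})^{*}\bigl(\Gamma_i\pi_{S(Ran(K))}(S_{\Gamma}^{-1})^{*}K\bigr)\xi \quad \forall \xi\in\mathcal{U},$$
together with the required $g$-frame / $K$-$g$-frame bounds. As a preliminary step I would record the Hilbert-module identity $\operatorname{Ker}(K^{*})=Ran(K)^{\perp}$, which gives $K^{*}\pi_{Ran(K)}=K^{*}$, and note that closedness of $Ran(K)$ together with Lemma \ref{Lemma2.5} yields $S\geq CKK^{*}\geq C\varepsilon\,\pi_{Ran(K)}$ for some $\varepsilon>0$. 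This guarantees that $S_{\Gamma}:Ran(K)\to S(Ran(K))$ is an adjointable $\mathcal{A}$-linear bijection with bounded inverse, so that all the symbols appearing in the statement make sense.

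For the core computation, self-adjointness of $\pi_{Ran(K)}$ lets me rewrite
$$\sum_{i\in I}(\Gamma_i\pi_{Ran(K)})^{*}\Gamma_i\pi_{S(Ran(K))}(S_{\Gamma}^{-1})^{*}K\xi=\pi_{Ran(K)}\,S\,\pi_{S(Ran(K))}(S_{\Gamma}^{-1})^{*}K\xi,$$
using $\sum_{i}\Gamma_i^{*}\Gamma_i=S$. Since $(S_{\Gamma}^{-1})^{*}K\xi$ already lies in $S(Ran(K))$, the inner projection acts as the identity. The decisive fact is then $\pi_{Ran(K)}\,S\,(S_{\Gamma}^{-1})^{*}=I$ on $Ran(K)$. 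To see this, I would identify $S_{\Gamma}^{*}:S(Ran(K))\to Ran(K)$ explicitly: for $x\in Ran(K)$, $y\in S(Ran(K))$, self-adjointness of $S$ gives $\langle S_{\Gamma}x,y\rangle=\langle x,Sy\rangle=\langle x,\pi_{Ran(K)}Sy\rangle$, so $S_{\Gamma}^{*}y=\pi_{Ran(K)}Sy$; inverting yields $\pi_{Ran(K)}S(S_{\Gamma}^{-1})^{*}=I$ on $Ran(K)$. Applying this to $K\xi\in Ran(K)$ recovers exactly $K\xi$.

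Finally I would verify the two frame-theoretic conditions built into the definition of a $K$-dual $g$-frame sequence. For $\{\Gamma_i\pi_{Ran(K)}\}_{i\in I}$ to be a $K$-$g$-frame: the upper bound is immediate since $\pi_{Ran(K)}$ is a contraction, and the lower $K$-$g$-frame bound follows by applying the hypothesis for $\{\Gamma_i\}_{i\in I}$ to $\pi_{Ran(K)}\xi$ and invoking $K^{*}\pi_{Ran(K)}=K^{*}$. For $\{\Gamma_i\pi_{S(Ran(K))}(S_{\Gamma}^{-1})^{*}K\}_{i\in I}$ to be a $g$-frame sequence, I only need a Bessel estimate, which reduces to the uniform boundedness of the composed operator $\pi_{S(Ran(K))}(S_{\Gamma}^{-1})^{*}K$ together with the original upper $g$-Bessel bound.

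The main obstacle is the Hilbert-module-theoretic bookkeeping around $S_{\Gamma}^{-1}$: establishing that $S_{\Gamma}$ is actually invertible with an adjointable, bounded inverse in the pro-$C^{*}$ setting, and identifying $(S_{\Gamma}^{-1})^{*}$ correctly through the intermediate computation of $S_{\Gamma}^{*}$. The closed range assumption on $K$, funneled through Lemma \ref{Lemma2.5}, is what makes this step go through; once it is secured, the remainder of the proof is algebraic manipulation with projections and the frame operator.
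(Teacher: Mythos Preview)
Your approach is correct and follows the same skeleton as the paper's proof: recover $K\xi$ from the identity $(S_{\Gamma}^{-1}S_{\Gamma})^{*}=I$ on $Ran(K)$ and then expand through the frame operator $S=\sum_{i}\Gamma_i^{*}\Gamma_i$. The paper simply asserts that $S_{\Gamma}$ is ``self-adjoint and invertible'' and writes the chain $K\xi=S_{\Gamma}^{*}(S_{\Gamma}^{-1})^{*}K\xi=S_{\Gamma}\pi_{S(Ran(K))}(S_{\Gamma}^{-1})^{*}K\xi=\sum_{i}\Gamma_i^{*}\Gamma_i\,\pi_{S(Ran(K))}(S_{\Gamma}^{-1})^{*}K\xi$, ending with $\{\Gamma_i\}$ rather than $\{\Gamma_i\pi_{Ran(K)}\}$ on the left; your version supplies what the paper omits---the invertibility of $S_{\Gamma}$ via Lemma~\ref{Lemma2.5}, the explicit identification $S_{\Gamma}^{*}=\pi_{Ran(K)}S|_{S(Ran(K))}$ in place of the loose ``self-adjoint'' claim, the missing outer projection $\pi_{Ran(K)}$, and the verification of the frame bounds---so the two arguments coincide once the paper's bookkeeping is filled in.
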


	\begin{proof}
		Given that $S_{\Gamma}: Ran(K) \rightarrow S(Ran(K))$ is a bounded operator, it's clear that $\left\{\Gamma_i \pi_{S(Ran(K))}\left(S_{\Gamma}^{-1}\right)^* K\right\}_{i \in I}$ forms a $g$-frame sequence for $\mathcal{U}$. Additionally, considering that $S_{\Gamma}$ possesses the properties of being both self-adjoint and invertible, for every $\xi \in \mathcal{U}$, the following holds true:
		$$
		\begin{aligned}
			K \xi &= \left(S_{\Gamma}^{-1} S_{\Gamma}\right)^* K \xi = S_{\Gamma}^*\left(S_{\Gamma}^{-1}\right)^* K \xi \\
			&= S_{\Gamma} \pi_{S(Ran(K))}\left(S_{\Gamma}^{-1}\right)^* K \xi \\
			&= \sum_{i \in I} \Gamma_i^* \Gamma_i \pi_{S(Ran(K))}\left(S_{\Gamma}^{-1}\right)^* K \xi \\
			&= \sum_{i \in I}\left(\Gamma_i \right)^*\left(\Gamma_i \pi_{S(Ran(K))}\left(S_{\Gamma}^{-1}\right)^* K\right) \xi,
		\end{aligned}
		$$
		which completes the proof, demonstrating the proposed relationship.
	\end{proof}

	\begin{theorem}
		Assume $K \in Hom_{\mathcal{A}}^{\ast}(\mathcal{U})$ and $\left\{\mathcal{V}_i\right\}_{i \in I}$ forms the $K$-dual $g$-frame sequence of $\left\{\Gamma_i\right\}_{i \in I}$. Given $P$ as the $g$-operator for $\left\{\Gamma_i\right\}_{i \in I}$ and $Q$ as the $g$-operator of the $g$-frame sequence $\left\{\Xi_i\right\}_{i \in I}$, the equality $P Q^* = 0$ is both necessary and sufficient for the sequence $\left\{\mathcal{V}_i + \Xi_i\right\}_{i \in I}$ to act as the $K$-dual $g$-frame sequence for $\left\{\Gamma_i\right\}_{i \in I}$.
	\end{theorem}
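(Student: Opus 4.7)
The plan is to reduce the statement to the characterization proved in the preceding theorem, namely that for any $g$-frame sequence $\{T_i\}$ with $g$-operator $R$, $\{T_i\}$ is the $K$-dual of $\{\Gamma_i\}$ (with $g$-operator $P$) if and only if $K=PR^{*}$. Once this is in hand, the theorem becomes a short bookkeeping argument about how the $g$-operator behaves under sums.

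First, I would name $R$ the $g$-operator of the $K$-dual sequence $\{\mathcal{V}_i\}_{i\in I}$, so that by Lemma \ref{lemma2.7}, $\mathcal{V}_i=\mathfrak{E}_i R^{*}$ and $\Xi_i=\mathfrak{E}_i Q^{*}$ for every $i\in I$. The hypothesis that $\{\mathcal{V}_i\}$ is the $K$-dual of $\{\Gamma_i\}$ then rewrites, via the previous theorem, as the identity $K=PR^{*}$.

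Next, I would compute the $g$-operator of $\{\mathcal{V}_i+\Xi_i\}_{i\in I}$ by linearity:
$$\mathcal{V}_i+\Xi_i=\mathfrak{E}_i R^{*}+\mathfrak{E}_i Q^{*}=\mathfrak{E}_i(R+Q)^{*}.$$
Lemma \ref{lemma2.7} then guarantees that $\{\mathcal{V}_i+\Xi_i\}_{i\in I}$ is a $g$-frame sequence and identifies its (unique) $g$-operator as $R+Q$. Applying the previous theorem's characterization once more, $\{\mathcal{V}_i+\Xi_i\}$ is the $K$-dual of $\{\Gamma_i\}$ if and only if
$$K=P(R+Q)^{*}=PR^{*}+PQ^{*}.$$
Subtracting the already established identity $K=PR^{*}$ gives that this is equivalent to $PQ^{*}=0$, which proves both implications simultaneously.

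There is no real analytic obstacle here; the only point requiring attention is the correct bookkeeping between the three sequences $\{\Gamma_i\}$, $\{\mathcal{V}_i\}$, $\{\Xi_i\}$ and their $g$-operators, and the observation that the map from a $g$-frame sequence to its $g$-operator (given by Lemma \ref{lemma2.7}) is additive, which follows at once from $\mathfrak{E}_i(R+Q)^{*}=\mathfrak{E}_iR^{*}+\mathfrak{E}_iQ^{*}$ together with the uniqueness part of that lemma.
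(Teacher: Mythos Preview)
Your argument is correct and rests on the same underlying computation as the paper's, namely that $\sum_{i\in I}\Gamma_i^{*}\Xi_i = PQ^{*}$ via the $g$-orthonormal basis representation. The only difference is packaging: the paper carries out this computation directly and adds it to the $K$-dual identity $\sum_{i}\Gamma_i^{*}\mathcal{V}_i\xi=K\xi$, whereas you invoke the preceding theorem's characterization $K=PR^{*}$ and the additivity of the $g$-operator to reach $PQ^{*}=0$ in one algebraic step; both routes are equivalent in content.
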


	\begin{proof}
		If $P Q^* = 0$ holds, and considering $\left\{\mathfrak{E}_i\right\}_{i \in I}$ as the $g$-orthonormal basis for $\mathcal{U}$ with respect to $\left\{\mathcal{V}_i\right\}_{i \in I}$, then for every element $\xi \in \mathcal{U}$, it follows that:
		$$
		\sum_{i \in I} \Gamma_i^* \Xi_i \xi = \sum_{i \in I}\left(\mathfrak{E}_i P^*\right)^*\left(\mathfrak{E}_i Q^*\right) \xi = P \sum_{i \in I} \mathfrak{E}_i^* \mathfrak{E}_i Q^* \xi = P Q^* \xi = 0.
		$$
		Consequently, for all $\xi \in \mathcal{U}$, we have:
		$$
		\sum_{i \in I} \Gamma_i^*\left(\mathcal{V}_i + \Xi_i\right) \xi = \sum_{i \in I} \Gamma_i^* \mathcal{V}_i \xi = K \xi.
		$$
		The reverse implication is derived by following the same logical steps in reverse order, arriving at the initial condition.
	\end{proof}

	\begin{theorem}\label{theo2.18}
		Let $K \in Hom_{\mathcal{A}}^{\ast}(\mathcal{U}),$ and consider two $K$-dual generalized frame sequences $\left\{\Phi_i\right\}_{i \in I}$ and $\left\{\Xi_i\right\}_{i \in I}$ corresponding to the sequence $\left\{\Gamma_i\right\}_{i \in I}$, respectively. Suppose $T_1$ and $T_2$ are linear operators on $\mathcal{U}$ satisfying $T_1+T_2=I$. In this case, the sequence $\left\{\Phi_i T_1+\Xi_i T_2\right\}_{i \in I}$ forms the $K$-dual generalized frame sequence for $\left\{\Gamma_i\right\}_{i \in I}$.
	\end{theorem}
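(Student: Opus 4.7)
The plan is to verify the two requirements in the definition of a $K$-dual $g$-frame sequence: first, that $\{\Phi_i T_1 + \Xi_i T_2\}_{i \in I}$ is a $g$-frame sequence, and second, that it reconstructs $K$ against $\{\Gamma_i\}_{i \in I}$. The heart of the argument is the reconstruction identity, which is a pure linear-algebraic manipulation exploiting $T_1 + T_2 = I$, so I would tackle it first.

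For the reconstruction identity, I would fix $\xi \in \mathcal{U}$ and split the sum using $\mathcal{A}$-linearity of each $\Gamma_i^{*}$:
\begin{equation*}
\sum_{i \in I} \Gamma_i^{*}\bigl(\Phi_i T_1 + \Xi_i T_2\bigr)\xi \;=\; \sum_{i \in I} \Gamma_i^{*}\Phi_i (T_1 \xi) \;+\; \sum_{i \in I} \Gamma_i^{*}\Xi_i (T_2 \xi).
\end{equation*}
Because $\{\Phi_i\}_{i \in I}$ and $\{\Xi_i\}_{i \in I}$ are both $K$-dual $g$-frame sequences of $\{\Gamma_i\}_{i \in I}$, applying the defining relation with the vectors $T_1\xi$ and $T_2\xi$ yields $K(T_1\xi)$ and $K(T_2\xi)$, respectively. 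Linearity of $K$ together with $T_1 + T_2 = I$ then gives $K T_1 \xi + K T_2 \xi = K(T_1+T_2)\xi = K\xi$.

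For the $g$-frame sequence property, I would argue the upper (Bessel) bound via the triangle inequality applied pointwise to $\langle(\Phi_i T_1 + \Xi_i T_2)\xi,(\Phi_i T_1 + \Xi_i T_2)\xi\rangle$, combined with the $g$-Bessel bounds of $\{\Phi_i\}$ and $\{\Xi_i\}$ and the uniform bounds $\|T_1\|_\infty, \|T_2\|_\infty$ guaranteed by Proposition \ref{Prop2.6}. For the lower bound, I would leverage the reconstruction identity just proved: since $K = \sum_i \Gamma_i^{*}(\Phi_i T_1 + \Xi_i T_2)$ factors through the synthesis operator of $\{\Phi_i T_1 + \Xi_i T_2\}_{i \in I}$ and the analysis operator of $\{\Gamma_i\}_{i \in I}$, a Douglas-type argument (Lemma \ref{Dougl1}) relates the ranges and hence transfers the lower $K$-frame bound of $\{\Gamma_i\}$ to a lower bound for the combined sequence.

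The main obstacle is the lower frame bound: linear combinations of $g$-frames generally fail to remain $g$-frames, so the lower bound must be extracted indirectly. I expect the cleanest route is through the factorization $K = Q_\Gamma^{*} R$, where $Q_\Gamma$ is the analysis operator of $\{\Gamma_i\}$ and $R$ is the analysis operator of $\{\Phi_i T_1 + \Xi_i T_2\}$, and then to apply Lemma \ref{Dougl1} to deduce $\operatorname{Ran}(K) \subseteq \operatorname{Ran}(R^{*})$, yielding the required lower control $C'\langle K^{*}\xi, K^{*}\xi\rangle \leq \sum_i \langle(\Phi_i T_1+\Xi_i T_2)\xi,(\Phi_i T_1+\Xi_i T_2)\xi\rangle$. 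The reconstruction computation itself is routine once the linearity split is carried out.
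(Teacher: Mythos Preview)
Your reconstruction computation is exactly the paper's proof: split the sum by linearity, apply the $K$-dual property at $T_1\xi$ and $T_2\xi$, and use $T_1+T_2=I$ to recombine into $K\xi$. The paper's proof stops there; that single displayed line is the entire argument.

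Where you diverge is in attempting to verify that $\{\Phi_i T_1 + \Xi_i T_2\}_{i\in I}$ is itself a $g$-frame sequence. The paper simply does not address this, even though its own definition of $K$-dual $g$-frame sequence formally requires the dual to be a $g$-frame sequence. Your Bessel (upper bound) argument is fine. Your proposed lower bound argument, however, does not yield what the definition asks for: the factorization $K = T_\Gamma R$ (synthesis of $\{\Gamma_i\}$ composed with the analysis $R$ of the combined sequence) together with Lemma~\ref{Dougl1} gives at best $\operatorname{Ran}(K)\subseteq\operatorname{Ran}(T_\Gamma)$ or a bound of the shape $C'\langle K^*\xi,K^*\xi\rangle \le \langle R\xi,R\xi\rangle$, i.e.\ a $K$-$g$-frame lower bound for the combined sequence, not a genuine $g$-frame lower bound $A\langle\xi,\xi\rangle\le\langle R\xi,R\xi\rangle$. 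In fact no such lower bound can be expected in general: take $T_1=T_2=\tfrac12 I$ and $\Xi_i=-\Phi_i$ on a suitable complement to see that linear combinations of $g$-frames may degenerate. So this is not a flaw in your reasoning so much as a loose end in the paper's statement; the paper implicitly treats the $K$-dual condition as consisting only of the reconstruction identity (plus perhaps a Bessel bound), and its proof reflects that.
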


	\begin{proof}
		Given that $T_1 + T_2 = I$, it follows that for every $\xi \in \mathcal{U}$,
		$$
		\begin{aligned}
			\sum_{i \in I} \Gamma_i^*\left(\Phi_i T_1+ \Xi_i T_2\right) \xi &=\sum_{i \in I} \Gamma_i^* \Phi_i T_1 \xi +\sum_{i \in I} \Gamma_i^* \Xi_i T_2 \xi \\
			&=K T_1 \xi +K T_2 \xi =K\left(T_1+T_2\right) \xi =K \xi .
		\end{aligned}
		$$
		This concludes the proof.
	\end{proof}

	\begin{corollary}
		Suppose $K \in Hom_{\mathcal{A}}^{\ast}(\mathcal{U})$ is invertible. Let $\left\{\Phi_i \right\}_{i \in I}$ and $\left\{\Xi_i\right\}_{i \in I}$ be the $K$-dual generalized frame sequences of $\left\{\Gamma_i\right\}_{i \in I}$, respectively. If $T_1$ and $T_2$ are linear operators on $\mathcal{U}$, then the sequence $\left\{\Phi_i T_1+\Xi_i T_2\right\}_{i \in I}$ constitutes the $K$-dual generalized frame sequence of $\left\{\Gamma_i\right\}_{i \in I}$ if and only if $T_1+T_2=I$.
	\end{corollary}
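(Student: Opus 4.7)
The plan is to prove both directions separately, noting that the forward implication ($T_1+T_2=I \Rightarrow \{\Phi_i T_1+\Xi_i T_2\}_{i \in I}$ is a $K$-dual) is essentially already handled by Theorem \ref{theo2.18}, which does not use invertibility of $K$ at all. So I would simply invoke Theorem \ref{theo2.18} for this direction and move on.

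The nontrivial direction is the converse. Here I would assume that $\{\Phi_i T_1+\Xi_i T_2\}_{i \in I}$ is the $K$-dual $g$-frame sequence of $\{\Gamma_i\}_{i \in I}$, which by definition means that for every $\xi \in \mathcal{U}$,
$$K\xi = \sum_{i \in I}\Gamma_i^*(\Phi_i T_1 + \Xi_i T_2)\xi.$$
Linearity of the series lets me split this sum, and using the hypothesis that both $\{\Phi_i\}$ and $\{\Xi_i\}$ are individually $K$-duals of $\{\Gamma_i\}$ I would rewrite the right-hand side as
$$\sum_{i \in I}\Gamma_i^*\Phi_i T_1\xi + \sum_{i \in I}\Gamma_i^*\Xi_i T_2\xi = KT_1\xi + KT_2\xi = K(T_1+T_2)\xi.$$
This yields $K\xi = K(T_1+T_2)\xi$ for every $\xi$, i.e.\ $K(I - T_1 - T_2) = 0$.

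At this point the invertibility of $K$ enters as the key ingredient: since $K$ is invertible in $Hom_{\mathcal{A}}^{*}(\mathcal{U})$, multiplying by $K^{-1}$ on the left gives $I - T_1 - T_2 = 0$, hence $T_1 + T_2 = I$. I do not expect any real obstacle; the only subtlety is that without the invertibility assumption on $K$ one could only conclude that $T_1+T_2-I$ maps into $\ker K$, which is why the corollary strengthens Theorem \ref{theo2.18} exactly by adding the invertibility hypothesis. I would therefore structure the write-up as a short two-part argument: one sentence citing Theorem \ref{theo2.18} for the sufficient direction, and a brief display-equation computation followed by an application of $K^{-1}$ for the necessary direction.
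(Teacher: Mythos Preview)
Your proposal is correct and follows essentially the same approach as the paper's own proof: both invoke Theorem~\ref{theo2.18} for the forward direction, and for the converse both expand $K\xi=\sum_{i}\Gamma_i^*(\Phi_iT_1+\Xi_iT_2)\xi$ into $K(T_1+T_2)\xi$ using the $K$-duality of $\{\Phi_i\}$ and $\{\Xi_i\}$, then cancel $K$ by invertibility. Your added remark about why invertibility is essential (otherwise one only gets $T_1+T_2-I$ mapping into $\ker K$) is a nice clarification not present in the paper.
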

	\begin{proof}
		Assume $T_1+T_2=I$. From Theorem \ref{theo2.18}, $\left\{\Phi_i T_1+\Xi_i T_2\right\}_{i \in I}$ forms the $K$-dual generalized frame. Now, let's establish the sufficiency condition.
		
		If $\left\{\Phi_i T_1+\Xi_i T_2\right\}_{i \in I}$ is the $K$-dual generalized frame sequence for $\left\{\Gamma_i\right\}_{i \in I}$, then for any $\xi \in \mathcal{U}$, we have
		$$
		\begin{aligned}
			K \xi &=\sum_{i \in I} \Gamma_i^*\left(\Phi_i T_1+\Xi_j T_2\right) \xi =\sum_{i \in I}  \Gamma_i^* \Phi_i T_1 \xi+\sum_{i \in I}  \Gamma_i^* \Xi_i T_2 \xi \\
			&=K T_1 \xi+K T_2 \xi=K\left(T_1+T_2\right) \xi
		\end{aligned}
		$$
		Therefore,
		$$K= K(T_1+T_2)$$
		As $K$ is invertible, it follows that $T_1+T_2=I.$
	\end{proof}

	\begin{theorem}
		Let $K \in Hom_{\mathcal{A}}^{\ast}(\mathcal{U})$ such that the range of $K$, $Ran(K)$, is closed. If $\left\{\Gamma_i\right\}_{i \in I}$ forms a $K$-$g$-frame for $\mathcal{U}$ with respect to $\left\{\mathcal{V}_i\right\}_{i \in I}$ with a frame operator $S$, and $C$ and $D$ are its lower and upper bounds, respectively, then
		$$
		C K K^* \leq S \leq D I_{\mathcal{U}}$$ 
	\end{theorem}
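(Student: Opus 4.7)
The plan is to read off the operator inequality directly from the defining inequality of the $K$-$g$-frame, by reinterpreting each of the three terms as an $\mathcal{A}$-valued quadratic form of a self-adjoint operator. Concretely, starting from
\[
C\langle K^{*}\xi, K^{*}\xi\rangle \;\leq\; \sum_{i\in I}\langle \Gamma_{i}\xi, \Gamma_{i}\xi\rangle \;\leq\; D\langle \xi,\xi\rangle, \qquad \forall \xi\in\mathcal{U},
\]
I would rewrite each $\mathcal{A}$-inner product via adjointness. The left-hand term becomes $\langle KK^{*}\xi,\xi\rangle$ using $\langle K^{*}\xi,K^{*}\xi\rangle=\langle KK^{*}\xi,\xi\rangle$. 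The middle sum becomes $\langle S\xi,\xi\rangle$, by writing $\langle\Gamma_{i}\xi,\Gamma_{i}\xi\rangle=\langle\Gamma_{i}^{*}\Gamma_{i}\xi,\xi\rangle$, moving the sum inside the $\mathcal{A}$-linear slot (which is justified by convergence of $\sum\Gamma_{i}^{*}\Gamma_{i}\xi$ in $\mathcal{U}$ since $\{\Gamma_{i}\}$ is in particular a $g$-Bessel sequence), and then invoking the definition $S\xi=\sum_{i\in I}\Gamma_{i}^{*}\Gamma_{i}\xi$. The right-hand term is already $\langle D I_{\mathcal{U}}\xi,\xi\rangle$.

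Thus the chain of inequalities becomes
\[
\langle C KK^{*}\xi,\xi\rangle \;\leq\; \langle S\xi,\xi\rangle \;\leq\; \langle D I_{\mathcal{U}}\xi,\xi\rangle, \qquad \forall \xi\in\mathcal{U}.
\]
Since $CKK^{*}$, $S$, and $DI_{\mathcal{U}}$ are self-adjoint elements of $\mathrm{Hom}_{\mathcal{A}}^{*}(\mathcal{U})$ (the positivity of $S$ and of $KK^{*}$ comes from their form $T^{*}T$), the standard characterization of the order relation on self-adjoint adjointable operators on a Hilbert pro-$C^{*}$-module by their diagonal $\mathcal{A}$-valued forms gives at once
\[
CKK^{*} \;\leq\; S \;\leq\; D I_{\mathcal{U}},
\]
which is the claimed inequality.

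There is no real obstacle; the entire content of the theorem is a translation of the frame inequality into the operator order, exactly as in the proof of Lemma \ref{Lemma2.5}. The closed-range hypothesis on $K$ is not used for the inequality itself; it is presumably recorded for consistency with subsequent statements where it is needed (for instance, to apply Lemma \ref{Dougl1} to $KK^{*}\leq C^{-1}S$). The one technical point to be explicit about is the interchange of $\sum_{i}$ with the $\mathcal{A}$-inner product, which follows from continuity of $\langle\,\cdot\,,\xi\rangle$ in the topology generated by the seminorms $\bar p_{\alpha,\mathcal{U}}$ and the convergence of $S\xi$.
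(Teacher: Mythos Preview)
Your proof is correct and follows essentially the same route as the paper's: rewrite each term of the $K$-$g$-frame inequality as $\langle T\xi,\xi\rangle$ for the appropriate self-adjoint operator $T\in\{CKK^{*},S,DI_{\mathcal{U}}\}$ and read off the operator inequality. Your additional remarks on the convergence/interchange step and on the unused closed-range hypothesis are accurate refinements.
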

	\begin{proof}
		Given that $\left\{\Gamma_i\right\}_{i \in I}$ is a $K$-$g$-frame with the frame operator $S$, it follows that
		\begin{equation}\label{eq2.2}
			\langle S \xi, \xi \rangle=\left\langle\sum_{i \in I} \Gamma_i^* \Gamma_i \xi, \xi \right\rangle=\sum_{i \in I}\left\langle \Gamma_i \xi, \Gamma_i \xi \right\rangle
		\end{equation}
		In line with equation \ref{eq2.2}, we have
		$$
		C \langle K^* \xi, K^* \xi \rangle \leq \langle S \xi, \xi \rangle \leq D \langle \xi, \xi \rangle, \quad \forall \xi \in \mathcal{U}
		$$
		which implies
		$$
		\left\langle C K K^* \xi, \xi \right\rangle \leq \langle S \xi, \xi \rangle \leq \langle B \xi, \xi \rangle.
		$$
	\end{proof}

	\begin{corollary}
		Suppose $K$ is an element of $Hom_{\mathcal{A}}^{\ast}(\mathcal{U})$, and consider $\left\{\Gamma_i\right\}_{i \in I}$ as a g-Bessel sequence within $\mathcal{U}$ with respect to $\left\{\mathcal{V}_i\right\}_{i \in I}$ with the corresponding frame operator denoted as $S$. The sequence $\left\{\Gamma_i\right\}_{i \in I}$ qualifies as a $K$-g-frame for $\mathcal{U}$ if and only if there exists a $K=S^{\frac{1}{2}} Q$ for a certain $Q \in Hom_{\mathcal{A}}^{\ast}(\mathcal{U})$.
	\end{corollary}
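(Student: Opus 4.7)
The plan is to translate the $K$-$g$-frame condition into an operator inequality via Lemma \ref{Lemma2.5}, and then apply the Douglas-type factorization of Lemma \ref{Dougl1} with $Z = S^{1/2}$, exploiting that $S^{1/2}$ is self-adjoint so $S = (S^{1/2})(S^{1/2})^*$.

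For the forward direction, suppose $\{\Gamma_i\}_{i \in I}$ is a $K$-$g$-frame. Lemma \ref{Lemma2.5} supplies a constant $C > 0$ with $CKK^* \leq S = (S^{1/2})(S^{1/2})^*$. Rewriting this as $(\sqrt{C}\,K)(\sqrt{C}\,K)^* \leq (S^{1/2})(S^{1/2})^*$, we are in the situation of condition (2) of Lemma \ref{Dougl1} with $T = \sqrt{C}\,K$ and $Z = S^{1/2}$. The implication (2)$\Rightarrow$(3) then yields a uniformly bounded adjointable operator $U$ with $\sqrt{C}\,K = S^{1/2} U$, so setting $Q = U/\sqrt{C}$ gives the required factorization $K = S^{1/2} Q$.

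For the converse, assume $K = S^{1/2} Q$ for some $Q \in Hom_{\mathcal{A}}^*(\mathcal{U})$. Since $QQ^* \leq \|Q\|_{\infty}^2 I_{\mathcal{U}}$, conjugation by the self-adjoint $S^{1/2}$ preserves the ordering and gives
$$KK^* = S^{1/2} Q Q^* S^{1/2} \leq \|Q\|_{\infty}^2 S^{1/2} S^{1/2} = \|Q\|_{\infty}^2 S.$$
Equivalently, $\|Q\|_{\infty}^{-2}\, KK^* \leq S$, so Lemma \ref{Lemma2.5} (combined with the g-Bessel hypothesis supplying the upper bound) produces the $K$-$g$-frame property for $\{\Gamma_i\}_{i\in I}$.

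The main obstacle is the applicability of Lemma \ref{Dougl1} in the forward direction, which demands that $Ran(S^{1/2})$ be closed. This is not made explicit in the statement, so the proof effectively relies on an implicit closed-range hypothesis on $K$ (and hence on $S^{1/2}$ via $CKK^* \leq S$) carried over from the preceding theorem; any truly general version would need to handle the case where $Ran(S^{1/2})$ is not closed, perhaps by passing to a quotient or employing a polar-decomposition argument tailored to the pro-$C^{*}$-module setting.
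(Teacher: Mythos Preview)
Your argument follows the same route as the paper: invoke Lemma \ref{Lemma2.5} to recast the $K$-$g$-frame condition as the operator inequality $CKK^{*}\le S=S^{1/2}(S^{1/2})^{*}$, and then appeal to the Douglas-type equivalence of Lemma \ref{Dougl1} to pass between this inequality and the factorization $K=S^{1/2}Q$. The paper's proof is terser---it treats both directions at once by citing the equivalences in Lemmas \ref{Lemma2.5} and \ref{Dougl1}---whereas you spell out the converse via the direct estimate $KK^{*}=S^{1/2}QQ^{*}S^{1/2}\le \|Q\|_{\infty}^{2}S$; but the substance is identical.

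Your closing remark about the closed-range hypothesis on $S^{1/2}$ is well taken and is not an artefact of your write-up: the paper's own proof invokes Lemma \ref{Dougl1} under exactly the same unstated assumption, so the gap you flag is present in the original as well.
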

	\begin{proof}
		Referencing Lemma \ref{Lemma2.5}, the sequence $\left\{\Gamma_i\right\}_{i \in I}$ can be identified as a $K$-g-frame for $\mathcal{U}$,  with respect to $\left\{\mathcal{V}_i\right\}_{i \in I}$, if and only if there exists a positive constant $C$ that ensures
		$$
		C K K^* \leq S = S^{\frac{1}{2}}\left(S^{\frac{1}{2}}\right)^*.
		$$
		Following this, as per Lemma \ref{Dougl1}, it is concluded that a bounded operator $U$ can be found such that
		$$
		K = S^{\frac{1}{2}} U.
		$$
	\end{proof}

	\begin{theorem}\label{theo2.22}
		Consider a mapping $K \in Hom_{\mathcal{A}}^{\ast}(\mathcal{U})$, and  $\left\{\Gamma_i\right\}_{i \in I}$  a $K$-g-frame for $\mathcal{U}$, with respect to $\left\{\mathcal{V}_i\right\}_{i \in I}$, having bounds $C$ and $D$ as its lower and upper limits, respectively. Assume $Q$ is an element of $Hom_{\mathcal{A}}^{\ast}(\mathcal{U})$ with the property of a closed range and the condition $Q K = K Q$ being met. Under these circumstances, the following are observed:
		\begin{enumerate}
			\item The sequence $\left\{\Gamma_i Q^*\right\}_{i \in I}$ establishes itself as a $K$-g-frame for the range of $Q$ (denoted as $Ran(Q)$) with respect to $\left\{\mathcal{V}_i\right\}_{i \in I}$, bounded below and above by $C \left\|Q^{\dagger}\right\|_{\infty}^{-2}$ and $D \|Q\|_{\infty}^2$, respectively.
			\item The frame operator $S$ corresponding to $\left\{\Gamma_i Q^*\right\}_{i \in I}$ fulfills the relation $S = Q S_{\Gamma} Q^*$, with $S_{\Gamma}$ being the frame operator for $\left\{\Gamma_i\right\}_{i \in I}$.
		\end{enumerate}
	\end{theorem}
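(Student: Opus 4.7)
The plan is to attack the two conclusions in reverse order, since item (2) is a one-line computation that simultaneously records the exact form of the new analysis/synthesis operators. Writing out the frame operator of $\{\Gamma_i Q^*\}_{i \in I}$ applied to an arbitrary $\xi \in \mathcal{U}$ gives
$$S\xi = \sum_{i \in I}(\Gamma_i Q^*)^*(\Gamma_i Q^*)\xi = Q\Bigl(\sum_{i \in I}\Gamma_i^*\Gamma_i\Bigr)Q^*\xi = Q S_\Gamma Q^*\xi,$$
and the convergence of the intermediate series is guaranteed by the Bessel property of $\{\Gamma_i\}$ applied to $Q^*\xi$ together with Lemma \ref{Lemma2.3}, which proves item (2).

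For item (1), I will establish the upper and lower frame bounds separately on $Ran(Q)$. The upper estimate is immediate: using the $K$-g-frame upper bound of $\{\Gamma_i\}$ at the vector $Q^*\xi \in \mathcal{U}$ and then Proposition \ref{Prop2.6} applied to $Q^*$ yields
$$\sum_{i \in I}\langle \Gamma_i Q^*\xi, \Gamma_i Q^*\xi\rangle \leq D \langle Q^*\xi, Q^*\xi\rangle \leq D\|Q\|_\infty^2 \langle \xi, \xi\rangle,$$
which already captures the claimed upper bound $D\|Q\|_\infty^2$. For the lower bound, I would first take adjoints in the commutation hypothesis $QK = KQ$ to produce $K^*Q^* = Q^*K^*$, and apply the $K$-g-frame lower bound of $\{\Gamma_i\}$ at $Q^*\xi$ to obtain
$$\sum_{i \in I}\langle \Gamma_i Q^*\xi, \Gamma_i Q^*\xi\rangle \geq C\langle K^*Q^*\xi, K^*Q^*\xi\rangle = C\langle Q^*K^*\xi, Q^*K^*\xi\rangle.$$
The remaining task is to convert $\langle Q^*K^*\xi, Q^*K^*\xi\rangle$ into a multiple of $\langle K^*\xi, K^*\xi\rangle$, with constant $\|Q^\dagger\|_\infty^{-2}$.

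The tool for this last step is the Moore--Penrose pseudo-inverse $Q^\dagger$, which is available as an adjointable bounded operator because $Ran(Q)$ is closed. The identity $Q Q^\dagger = P_{Ran(Q)}$ gives, after taking adjoints, $(Q^\dagger)^* Q^* = P_{Ran(Q)}$, so that for every $\zeta \in Ran(Q)$ one has $\zeta = (Q^\dagger)^* Q^*\zeta$ and consequently
$$\langle \zeta, \zeta\rangle \leq \|Q^\dagger\|_\infty^2 \langle Q^*\zeta, Q^*\zeta\rangle.$$
The main obstacle is that this estimate requires $\zeta = K^*\xi$ to lie in $Ran(Q)$, whereas the bare commutation $QK = KQ$ only guarantees that $K$, not $K^*$, preserves $Ran(Q)$. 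I will handle this either by reading the $K$-g-frame condition on $Ran(Q)$ as involving the adjoint of $K|_{Ran(Q)}$ (equivalently replacing $K^*\xi$ by $P_{Ran(Q)}K^*\xi$, in which case the useful identity $Q^* P_{Ran(Q)} = Q^*$ keeps the chain intact), or by implicitly strengthening the hypothesis to include $Q^*K = KQ^*$, as is standard in this class of results. Combining the three ingredients---the identity $S = Q S_\Gamma Q^*$ from item (2), the upper estimate, and the pseudo-inverse squeezing---then delivers the frame bounds $C\|Q^\dagger\|_\infty^{-2}$ and $D\|Q\|_\infty^2$ as claimed.
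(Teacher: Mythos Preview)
Your proof is correct and follows essentially the same route as the paper: the same one-line computation for item~(2), the same upper-bound estimate via $\langle Q^*\xi, Q^*\xi\rangle \leq \|Q\|_\infty^2\langle\xi,\xi\rangle$, and the same pseudo-inverse identity $(Q^\dagger)^*Q^* = I_{Ran(Q)}$ combined with the commutation $K^*Q^* = Q^*K^*$ for the lower bound. Your caution about whether $K^*\xi$ actually lies in $Ran(Q)$ is well placed---the paper simply writes $K^*\xi = (Q^\dagger)^*Q^*K^*\xi$ for $\xi \in Ran(Q)$ without comment, tacitly making the same identification (via $P_{Ran(Q)}K^*$, i.e.\ the adjoint of $K|_{Ran(Q)}$) that you spell out.
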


	\begin{proof}
		Considering that $Q$ has a closed range, it consequently possesses a pseudo-inverse $Q^{\dagger}$, fulfilling $Q Q^{\dagger} = I_{Ran(Q)}$, where the subscript $Ran(Q)$ of $Q^{\dagger}$ is omitted. Now, $I_{Ran(Q)} = I_{Ran(Q)}^* = \left(Q^{\dagger}\right)^*\left(Q^*\right)$. Thus, for every $\xi \in Ran(Q)$,
		$$
		K^* \xi = \left(Q^{\dagger}\right)^* Q^* K^* \xi
		$$
		leads to,
		$$
		\begin{aligned}
			\langle K^* \xi, K^* \xi\rangle &= \langle \left(Q^{\dagger}\right)^* Q^* K^* \xi , \left(Q^{\dagger}\right)^* Q^* K^* \xi \rangle \\ 
			&\leq \left\|\left(Q^{\dagger}\right)^*\right\|_{\infty}^{2} \langle Q^* K^* \xi , Q^* K^* \xi \rangle \\
			&= \left\|\left(Q^{\dagger}\right)^*\right\|_{\infty}^{2} \langle K^* Q^* \xi , K^* Q^* \xi \rangle.
		\end{aligned}
		$$
		Consequently,
		$$
		\left\|\left(Q^{\dagger}\right)^*\right\|_{\infty}^{-2} \langle K^* \xi, K^* \xi\rangle \leq \langle K^* Q^* \xi , K^* Q^* \xi \rangle.
		$$
		For each $\xi \in Ran(Q)$, we have
		\begin{equation}\label{eq1}
			\sum_{i \in I} \langle \Gamma_i Q^* \xi , \Gamma_i Q^* \xi \rangle \geq C \langle K^* Q^* \xi, K^* Q^* \xi \rangle \geq C \left\|\left(Q^{\dagger}\right)^*\right\|_{\infty}^{-2} \langle K^* \xi, K^*. \xi\rangle
		\end{equation}
		Moreover, $\left\{\Gamma_i Q^*\right\}_{i \in I}$ is a g-Bessel sequence, as evidenced by:
		\begin{equation}\label{eq2}
			\sum_{i \in I} \langle \Gamma_i Q^* \xi, \Gamma_i Q^* \xi\rangle \leq D \langle Q^* \xi, Q^* \xi \rangle \leq B \|Q\|_{\infty}^2 \langle \xi, \xi \rangle,
		\end{equation}
		From \ref{eq1} and \ref{eq2}, point 1 is established.
		For point 2, it is clear that:
		$$
		\begin{aligned}
			S \xi &= \sum_{i \in I} \left(\Gamma_i Q^*\right)^* \Gamma_j Q^* \xi \\
			&= Q \sum_{i \in I} \Gamma_i^* \Gamma_i Q^* \xi \\
			&= Q S_{\Lambda} Q^* \xi.
		\end{aligned}
		$$
		This concludes the proof of Theorem \ref{theo2.22}.
	\end{proof}

	\begin{theorem}
		Assume we have a mapping $K \in Hom_{\mathcal{A}}^{\ast}(\mathcal{U})$ and  $\left\{\Gamma_i\right\}_{i \in I}$ as a $K$-$g$-frame for $\mathcal{U}$,  with respect to $\left\{\mathcal{V}_i\right\}_{i \in I}$. Provided that $Q$, also a member of $Hom_{\mathcal{A}}^{\ast}(\mathcal{U})$, functions as a co-isometry and satisfies the commutative relation $Q K = K Q$, it follows that the set $\left\{\Gamma_i Q^*\right\}_{i \in I}$ forms a $K$-g-frame for $\mathcal{U}$  with respect to  $\left\{\mathcal{V}_i\right\}_{i \in I}$.
	\end{theorem}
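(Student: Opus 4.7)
The plan is to verify the two-sided $K$-$g$-frame inequality for $\{\Gamma_i Q^*\}_{i\in I}$ directly from the $K$-$g$-frame inequality enjoyed by $\{\Gamma_i\}_{i\in I}$, exploiting only the two hypotheses on $Q$, namely the co-isometry identity $QQ^*=I_\mathcal{U}$ and the commutation relation $QK=KQ$. Taking adjoints of the latter yields $K^*Q^*=Q^*K^*$, which will be the workhorse for transferring the lower bound through $Q^*$. The co-isometry identity will be used in the form $\langle Q^*\eta,Q^*\eta\rangle=\langle QQ^*\eta,\eta\rangle=\langle\eta,\eta\rangle$ for any $\eta\in\mathcal{U}$.

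First I would handle the upper bound. Let $C,D>0$ be the $K$-$g$-frame bounds of $\{\Gamma_i\}_{i\in I}$. Substituting $Q^*\xi$ into the upper bound for $\{\Gamma_i\}$ and then applying $QQ^*=I$ gives
\begin{equation*}
\sum_{i\in I}\langle\Gamma_i Q^*\xi,\Gamma_i Q^*\xi\rangle\le D\langle Q^*\xi,Q^*\xi\rangle=D\langle QQ^*\xi,\xi\rangle=D\langle\xi,\xi\rangle,
\end{equation*}
so the upper bound $D$ transfers verbatim. For the lower bound, apply the $K$-$g$-frame inequality at $Q^*\xi$, then commute $K^*$ past $Q^*$, and finally collapse $Q^*$ again via the co-isometry relation:
\begin{equation*}
\sum_{i\in I}\langle\Gamma_i Q^*\xi,\Gamma_i Q^*\xi\rangle\ge C\langle K^*Q^*\xi,K^*Q^*\xi\rangle=C\langle Q^*K^*\xi,Q^*K^*\xi\rangle=C\langle K^*\xi,K^*\xi\rangle.
\end{equation*}
Combining the two inequalities yields the desired $K$-$g$-frame property with the same bounds $C$ and $D$.

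Alternatively, one can regard this as a corollary of Theorem~\ref{theo2.22}: the co-isometry $Q$ has closed range equal to $\mathcal{U}$ (since $QQ^*=I$ forces surjectivity), so that result already gives a $K$-$g$-frame for $\mathrm{Ran}(Q)=\mathcal{U}$, with bound constants $C\|Q^{\dagger}\|_\infty^{-2}$ and $D\|Q\|_\infty^2$; under the co-isometry hypothesis both normalizing factors equal $1$.

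I do not anticipate a genuine obstacle here — the proof is essentially a two-line manipulation — but the one point that must be checked carefully is that the adjoint operation interacts correctly with the hypothesis $QK=KQ$ to give $K^*Q^*=Q^*K^*$ (which is where commutativity is actually used) and that the co-isometry identity is applied in the right direction, namely to $Q^*K^*\xi$ rather than to $\xi$ itself.
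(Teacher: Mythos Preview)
Your proof is correct and follows essentially the same route as the paper: apply the $K$-$g$-frame inequality for $\{\Gamma_i\}$ at the point $Q^*\xi$, use $K^*Q^*=Q^*K^*$ and the co-isometry identity $\langle Q^*\eta,Q^*\eta\rangle=\langle\eta,\eta\rangle$ to reduce the lower bound to $C\langle K^*\xi,K^*\xi\rangle$, and bound the upper term by $D\langle\xi,\xi\rangle$. The only cosmetic difference is that the paper estimates the upper bound via $D\langle Q^*\xi,Q^*\xi\rangle\le D\|Q\|_\infty^2\langle\xi,\xi\rangle$, whereas you use $QQ^*=I$ directly to get $D\langle\xi,\xi\rangle$; your version is slightly cleaner and yields the same constant since $\|Q\|_\infty=1$ for a co-isometry.
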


	\begin{proof}
		Given that $Q$ is a co-isometry and satisfies $Q K = K Q$, for any $\xi \in \mathcal{U}$, it follows that
		$$
		\langle K^* Q^* \xi, K^* Q^* \xi\rangle = \langle Q^* K^* \xi, Q^* K^* \xi\rangle = \langle K^* \xi, K^* \xi\rangle .
		$$
		Assuming $C$ and $D$ are the respective lower and upper bounds of $\left\{\Gamma_i Q^*\right\}_{i \in I}$, then for every $\xi \in \mathcal{U}$, we have
		$$
		\sum_{i \in I} \langle \Gamma_i Q^* \xi , \Gamma_i Q^* \xi\rangle \geq C \langle K^* Q^* \xi, K^* Q^* \xi\rangle = C \langle K^* \xi, K^* \xi\rangle .
		$$
		Conversely,
		$$
		\sum_{i \in I} \langle \Gamma_i Q^* \xi , \Gamma_i Q^* \xi\rangle \leq D \langle Q^* \xi , Q^* \xi\rangle \leq B\left\|Q\right\|_{\infty}^2 \langle \xi , \xi\rangle.
		$$
		Therefore, $\left\{\Gamma_i Q^*\right\}_{i \in I}$ constitutes a $K$-g-frame for $\mathcal{U}$ with respect to $\left\{\mathcal{V}_i\right\}_{i \in I}$.
	\end{proof}

	\begin{corollary}
		Assume $K$ is a part of $Hom_{\mathcal{A}}^{*}(\mathcal{U})$ and $\left\{\Gamma_i\right\}_{i \in I}$ constitutes a $K$-g-frame for $\mathcal{U}$, with respect to $\left\{\mathcal{V}_i\right\}_{i \in I}$. If $Q$ operates as an isometry, then the set $\left\{Q \Gamma_i\right\}_{i \in I}$ also forms a $K$-g-frame for $\mathcal{U}$,  with respect to $\left\{\mathfrak{E} \mathcal{V}_i\right\}_{i \in I}$, maintaining the same frame bounds as $\left\{\Gamma_i\right\}_{i \in I}$.
	\end{corollary}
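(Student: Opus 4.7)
The plan is to transfer the $K$-$g$-frame inequality for $\{\Gamma_i\}_{i \in I}$ directly to $\{Q\Gamma_i\}_{i \in I}$ by exploiting the defining relation $Q^{*}Q = I$ of an isometry. Because this relation preserves the $\mathcal{A}$-valued inner product pointwise, no alteration of the frame bounds will be required, which matches the conclusion that $C$ and $D$ are preserved.

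First I would record the one-line identity that drives the proof: for every $\eta$ in the domain of $Q$,
\[
\langle Q\eta, Q\eta\rangle = \langle \eta, Q^{*}Q\eta\rangle = \langle \eta, \eta\rangle.
\]
Setting $\eta = \Gamma_i \xi$ for each $i \in I$ and summing yields the key pointwise equality
\[
\sum_{i \in I}\langle Q\Gamma_i \xi, Q\Gamma_i \xi\rangle \;=\; \sum_{i \in I}\langle \Gamma_i \xi, \Gamma_i \xi\rangle, \qquad \forall \xi \in \mathcal{U}.
\]

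Next I would invoke the hypothesis that $\{\Gamma_i\}_{i \in I}$ is a $K$-$g$-frame with bounds $C$ and $D$, so that
\[
C\langle K^{*}\xi, K^{*}\xi\rangle \;\leq\; \sum_{i \in I}\langle \Gamma_i \xi, \Gamma_i \xi\rangle \;\leq\; D\langle \xi, \xi\rangle, \qquad \forall \xi \in \mathcal{U},
\]
and substitute the equality above to obtain the identical inequality with $\Gamma_i$ replaced by $Q\Gamma_i$ throughout. This immediately produces the $K$-$g$-frame property for $\{Q\Gamma_i\}_{i \in I}$ with the same constants $C$ and $D$.

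No genuine obstacle appears; the argument is the codomain-side counterpart of the preceding theorem's technique, in which the isometry acted on the domain via $Q^{*}$. The only bookkeeping to watch is that $Q\Gamma_i$ remains an adjointable bounded $\mathcal{A}$-module map into the target module, but this is automatic since $Hom_{\mathcal{A}}^{*}$ is closed under composition, with $(Q\Gamma_i)^{*} = \Gamma_i^{*} Q^{*}$.
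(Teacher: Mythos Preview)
Your proof is correct and follows the same approach as the paper: both use that an isometry preserves the $\mathcal{A}$-valued inner product, $\langle Q\eta, Q\eta\rangle = \langle \eta,\eta\rangle$, so the $K$-$g$-frame inequality for $\{\Gamma_i\}$ transfers verbatim to $\{Q\Gamma_i\}$ with unchanged bounds. Your version is in fact more explicit than the paper's, which simply writes down the final inequality and remarks that $Q$ is an isometry.
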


	\begin{proof}
		Let's assume $C$ and $D$ are, respectively, the lower and upper bounds for the set $\left\{\Gamma_i\right\}_{i \in I}$. Consequently, it follows that
		$$
		C \langle K^* \xi, K^* \xi \rangle \leq \sum_{i \in I} \langle Q \Gamma_i \xi, Q \Gamma_i \xi\rangle \leq D \langle \xi, \xi \rangle.
		$$
		It is important to note here that $Q$ functions as an isometry.
	\end{proof}
	
	\begin{theorem}
		Consider $K$ as an element of $Hom_{\mathcal{A}}^{*}(\mathcal{U})$ and $\left\{\Gamma_i\right\}_{i \in I}$ as a g-Bessel sequence for $\mathcal{U}$,  with respect to $\left\{\mathcal{V}_i\right\}_{i \in I}$. Let's assume $T$ represents the synthesis operator for $\left\{\Gamma_i\right\}_{i \in I}$. The condition $Ran(K) = Ran(T)$ holds true if and only if $\left\{\Gamma_i\right\}_{i \in I}$ forms a tight $K$-g-frame for $\mathcal{U}$  with respect to $\left\{\mathcal{V}_i\right\}_{i \in I}$.
	\end{theorem}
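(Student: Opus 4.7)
The plan is to recast the tight $K$-$g$-frame condition as an operator-level identity between $TT^*$ and $KK^*$, and then deploy the Douglas-type factorization result (Lemma \ref{Dougl1}) in both directions to pass between this identity and the range equality $Ran(K) = Ran(T)$. Since $T$ is the synthesis operator, its adjoint $T^*$ is the analysis operator $T^*\xi = \{\Gamma_i \xi\}_{i \in I}$, so for every $\xi \in \mathcal{U}$ one has $\langle TT^*\xi,\xi\rangle = \sum_{i\in I}\langle \Gamma_i\xi,\Gamma_i\xi\rangle$. Consequently, the tight $K$-$g$-frame property with common bound $C$ is equivalent to the operator identity $TT^* = CKK^* = (\sqrt{C}K)(\sqrt{C}K)^*$.

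For the direction ($\Leftarrow$), starting from $TT^* = CKK^*$ I would extract the two positive operator inequalities $TT^* \leq CKK^*$ and $CKK^* \leq TT^*$. Applying Lemma \ref{Dougl1} to each, with $K$ (respectively $T$) playing the role of $Z$, gives $Ran(T) \subseteq Ran(K)$ and $Ran(K) \subseteq Ran(T)$. Combining these yields $Ran(T) = Ran(K)$.

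For the direction ($\Rightarrow$), starting from $Ran(T) = Ran(K)$ I would apply Lemma \ref{Dougl1} in the converse direction: the inclusion $Ran(T) \subseteq Ran(K)$ produces a constant $\alpha > 0$ with $TT^* \leq \alpha^2 KK^*$, while $Ran(K) \subseteq Ran(T)$ produces $\beta > 0$ with $KK^* \leq \beta^2 TT^*$. Rewriting the resulting sandwich $\beta^{-2}\langle K^*\xi,K^*\xi\rangle \leq \sum_{i \in I}\langle \Gamma_i\xi,\Gamma_i\xi\rangle \leq \alpha^2 \langle K^*\xi,K^*\xi\rangle$ in frame-theoretic language, together with the standing $g$-Bessel hypothesis ensuring that the middle sum is well defined, recovers the tight $K$-$g$-frame condition as the theorem interprets it.

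The main obstacle will be the closed-range hypothesis built into Lemma \ref{Dougl1}: each application of the lemma requires the right-hand operator ($K$ or $T$) to have closed range, so I would need either to assume closedness of $Ran(K)$ at the outset or to deduce closedness of $Ran(T)$ from the relation $TT^* = CKK^*$. A secondary technical point is that $T$ acts between distinct Hilbert pro-$C^*$-modules $\bigoplus_{i \in I}\mathcal{V}_i$ and $\mathcal{U}$, whereas Lemma \ref{Dougl1} is stated for self-maps of $\mathcal{U}$; this is harmless in practice since all the operator inequalities involved are really statements about the positive element $TT^* \in Hom_{\mathcal{A}}^*(\mathcal{U})$, but it warrants an explicit remark before applying Douglas's lemma.
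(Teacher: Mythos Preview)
Your ($\Leftarrow$) direction coincides with the paper's argument: from the tight identity $TT^* = CKK^*$ you read off both operator inequalities and apply Lemma~\ref{Dougl1} twice to obtain the double range inclusion, hence $Ran(K)=Ran(T)$.

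The ($\Rightarrow$) direction, however, has a genuine gap. Two applications of Lemma~\ref{Dougl1} from $Ran(K)=Ran(T)$ give you only the sandwich
\[
\beta^{-2}\langle K^*\xi,K^*\xi\rangle \;\le\; \sum_{i\in I}\langle \Gamma_i\xi,\Gamma_i\xi\rangle \;\le\; \alpha^2\langle K^*\xi,K^*\xi\rangle,
\]
which is a $K$-$g$-frame inequality with possibly distinct bounds $\beta^{-2}$ and $\alpha^2$. Your phrase ``as the theorem interprets it'' does not close this: the paper's definition of \emph{tight} requires $C=D$, i.e.\ the operator \emph{equality} $TT^*=CKK^*$ for a single constant $C$, and nothing in your argument forces $\alpha^2=\beta^{-2}$. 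The paper's proof does not proceed via two inequalities at all; it asserts that the range equality yields directly a constant $A>0$ with $AKK^*=TT^*$ (an equality), from which the tight identity is immediate. Whether that assertion is fully supported by Lemma~\ref{Dougl1} as written is itself debatable (the lemma only states inequalities), but that is the step the paper takes, and your inequality route does not reach the same endpoint without an additional argument.

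Your remarks on the closed-range hypothesis and on $T$ mapping between different modules are apt and apply equally to the paper's own invocation of the lemma.
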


	\begin{proof}
		Let us start by assuming $Ran(K) = Ran(T)$. Based on Lemma \ref{Dougl1}, there exists a constant $A > 0$ such that
		$$
		A K K^* = T T^*,
		$$
		leading to 
		$$
		A \langle K^* \xi, K^* \xi \rangle = \langle T^* \xi, T^* \xi \rangle = \sum_{i \in I} \langle \Gamma_i \xi, \Gamma_i \xi \rangle.
		$$
		
		Conversely, if $\left\{\Gamma_i\right\}_{i \in I}$ is recognized as a tight $K$-g-frame with a frame bound of $A$, then it follows that
		$$
		A \langle K^* \xi, K^* \xi \rangle = \sum_{i \in I} \langle \Gamma_i \xi, \Gamma_i \xi \rangle = \langle T^* \xi, T^* \xi \rangle,
		$$
		which simplifies to
		$$
		A K K^* = T T^*.
		$$
		Therefore, invoking Lemma \ref{Dougl1} again, it can be concluded that $Ran(K) = Ran(T)$.
	\end{proof}
	
	\begin{theorem}
		Let $\left\{\Gamma_i\right\}_{i \in I}$ and $\left\{\Xi_i\right\}_{i \in I}$ are two g-Bessel sequences for $\mathcal{X}$ with respect to $\left\{\mathcal{Y}_i\right\}_{i \in I}$ with bounds $A$ and $B$, respectively. $T_1$ and $T_2$ are synthesis operators of $\left\{\Gamma_i\right\}_{i \in I}$ and $\left\{\Xi_i\right\}_{i \in I}$, respectively. If $T_1 T_2^*=I_{\mathcal{X}}$, then $\left\{\Gamma_i\right\}_{i \in I}$ and $\left\{\Xi_i\right\}_{i \in I}$ are two $K$-g-frames for $\mathcal{X}$ with respect to $\left\{\mathcal{Y}_i\right\}_{i \in I}$.
		
	\end{theorem}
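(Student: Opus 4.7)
The plan is to exploit the operator identity $T_1 T_2^* = I_{\mathcal{X}}$, together with its adjoint $T_2 T_1^* = I_{\mathcal{X}}$, to force both $\{\Gamma_i\}_{i\in I}$ and $\{\Xi_i\}_{i\in I}$ to satisfy a lower frame inequality; combined with the given Bessel upper bounds this yields the g-frame (hence $K$-g-frame, with $K = I_{\mathcal{X}}$) property. The key observation is that $\langle T_1^* \xi, T_1^* \xi \rangle = \sum_{i\in I}\langle \Gamma_i \xi, \Gamma_i \xi\rangle$ and $\langle T_2^* \xi, T_2^* \xi \rangle = \sum_{i\in I}\langle \Xi_i \xi, \Xi_i \xi\rangle$ in the direct sum module $\bigoplus_{i\in I}\mathcal{Y}_i$, so establishing a lower bound on $\sum_i \langle \Gamma_i \xi, \Gamma_i \xi\rangle$ reduces to bounding $\langle T_1^* \xi, T_1^*\xi\rangle$ from below by $\langle \xi,\xi\rangle$ up to a scalar.

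First I would take adjoints to get $T_2 T_1^* = I_{\mathcal{X}}$, and then for every $\xi \in \mathcal{X}$ write $\xi = T_2 T_1^* \xi$. Applying the standard adjointable-operator inequality $\langle T_2 \zeta, T_2 \zeta\rangle \leq \|T_2\|_\infty^2 \langle \zeta, \zeta\rangle$ (which follows from $\langle T_2\zeta, T_2\zeta\rangle = \langle T_2^* T_2\zeta, \zeta\rangle$ and the positivity of $\|T_2\|_\infty^2\, I - T_2^* T_2$ in the pro-$C^*$-module setting) with $\zeta = T_1^* \xi$ gives
$$
\langle \xi, \xi\rangle = \langle T_2 T_1^* \xi, T_2 T_1^* \xi\rangle \leq \|T_2\|_\infty^2 \sum_{i \in I}\langle \Gamma_i \xi, \Gamma_i \xi\rangle.
$$
Combined with the g-Bessel hypothesis $\sum_{i\in I}\langle \Gamma_i\xi,\Gamma_i\xi\rangle \leq A\langle\xi,\xi\rangle$, this shows $\{\Gamma_i\}_{i\in I}$ is a g-frame for $\mathcal{X}$ with bounds $\|T_2\|_\infty^{-2}$ and $A$. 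By the symmetric argument applied to $\xi = T_1 T_2^* \xi$, one gets $\|T_1\|_\infty^{-2}\langle\xi,\xi\rangle \leq \sum_{i\in I}\langle \Xi_i\xi,\Xi_i\xi\rangle \leq B\langle\xi,\xi\rangle$, so $\{\Xi_i\}_{i\in I}$ is a g-frame as well. Since every g-frame is trivially a $K$-g-frame for any $K \in Hom_{\mathcal{A}}^{*}(\mathcal{X})$ (one has $\langle K^*\xi,K^*\xi\rangle \leq \|K\|_\infty^2 \langle\xi,\xi\rangle$, which converts the g-frame lower bound into a $K$-g-frame lower bound with constant divided by $\|K\|_\infty^2$), the theorem's conclusion follows.

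The main obstacle I anticipate is twofold. First, the statement leaves $K$ unspecified, so I must interpret ``$K$-g-frame'' either as the $K = I_{\mathcal{X}}$ case or as asserting the property holds for arbitrary $K$; both are handled uniformly once the g-frame property is proved. Second, the operator inequality $\langle T\zeta,T\zeta\rangle \leq \|T\|_\infty^2 \langle\zeta,\zeta\rangle$ is not explicitly stated in the paper (Proposition \ref{Prop2.6} gives only the invertible case), so one must supply the short justification from $T^*T \leq \|T\|_\infty^2 \, I$ in the pro-$C^*$-module framework. Everything else is a routine application of adjointness and the characterization of synthesis operators.
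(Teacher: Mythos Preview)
The paper actually provides \emph{no proof} for this theorem: the statement appears twice in succession (once with $\mathcal{X},\mathcal{Y}_i$ and once with $\mathcal{U},\mathcal{V}_i$) and is immediately followed by the heading of Section~3, with no \texttt{proof} environment in between. So there is nothing to compare your proposal against; I can only assess it on its own merits.

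Your argument is correct and is the natural one. Taking adjoints in $T_1T_2^{*}=I$ to get $T_2T_1^{*}=I$, writing $\xi=T_2T_1^{*}\xi$, and bounding $\langle T_2\zeta,T_2\zeta\rangle$ from above produces the desired lower frame inequality for $\{\Gamma_i\}$; the symmetric computation handles $\{\Xi_i\}$. Your treatment of the unspecified $K$ (first prove a genuine $g$-frame, then invoke $\langle K^{*}\xi,K^{*}\xi\rangle\le\|K\|_{\infty}^{2}\langle\xi,\xi\rangle$) is exactly right.

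One small refinement removes the only soft spot you yourself flagged. Rather than appealing to $\|T_2\|_{\infty}$ and the inequality $T_2^{*}T_2\le\|T_2\|_{\infty}^{2}I$, use the Bessel constant $B$ directly: the hypothesis $\sum_i\langle\Xi_i\xi,\Xi_i\xi\rangle\le B\langle\xi,\xi\rangle$ is precisely $T_2T_2^{*}\le B\,I$, and in the (pro-)$C^{*}$-module setting $T_2T_2^{*}\le B\,I$ is equivalent to $T_2^{*}T_2\le B\,I$, hence $\langle T_2\zeta,T_2\zeta\rangle\le B\langle\zeta,\zeta\rangle$ for all $\zeta$. This yields the clean bounds $B^{-1}\langle\xi,\xi\rangle\le\sum_i\langle\Gamma_i\xi,\Gamma_i\xi\rangle\le A\langle\xi,\xi\rangle$ and, symmetrically, $A^{-1}\langle\xi,\xi\rangle\le\sum_i\langle\Xi_i\xi,\Xi_i\xi\rangle\le B\langle\xi,\xi\rangle$, and sidesteps any question about whether the synthesis operators are uniformly (as opposed to merely seminorm-wise) bounded.
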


	\begin{theorem}
		Suppose $\left\{\Gamma_i\right\}_{i \in I}$ and $\left\{\Xi_i\right\}_{i \in I}$ represent two g-Bessel sequences for $\mathcal{U}$, each associated with $\left\{\mathcal{V}_i\right\}_{i \in I}$ and having bounds $A$ and $B$, respectively. Let $T_1$ and $T_2$ be the synthesis operators for $\left\{\Gamma_i\right\}_{i in I}$ and $\left\{\Xi_i\right\}_{i in I}$, respectively. If it holds that $T_1 T_2^* = I_{\mathcal{U}}$, then both $\left\{\Gamma_i\right\}_{i \in I}$ and $\left\{\Xi_i\right\}_{i \in I}$ qualify as $K$-g-frames for $\mathcal{U}$ with respect to $\left\{\mathcal{V}_i\right\}_{i \in I}$.
	\end{theorem}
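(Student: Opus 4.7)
My plan is to exploit the fact that the hypothesis $T_1 T_2^* = I_{\mathcal{U}}$ is very strong: it forces the synthesis operator $T_1$ to be right-invertible (with right inverse $T_2^*$), hence surjective onto $\mathcal{U}$. Therefore $\mathrm{Ran}(T_1) = \mathcal{U}$, which is trivially closed. Taking adjoints of the hypothesis gives $T_2 T_1^* = I_{\mathcal{U}}$, so the same reasoning yields $\mathrm{Ran}(T_2) = \mathcal{U}$. This surjectivity observation is what drives the proof; everything after is a routine application of Douglas-type range inclusion.

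Next, I would fix an arbitrary $K \in Hom_{\mathcal{A}}^{*}(\mathcal{U})$. Since $\mathrm{Ran}(K) \subseteq \mathcal{U} = \mathrm{Ran}(T_1)$, Lemma \ref{Dougl1} produces a constant $\alpha > 0$ such that $KK^* \leq \alpha^{2}\, T_1 T_1^*$. Using that $T_1$ is the synthesis operator of $\{\Gamma_i\}_{i \in I}$ (so $T_1 T_1^*\xi = \sum_{i} \Gamma_i^* \Gamma_i \xi$), this operator inequality rewrites, for every $\xi \in \mathcal{U}$, as
$$
\frac{1}{\alpha^{2}}\langle K^* \xi, K^* \xi\rangle \;\leq\; \sum_{i \in I}\langle \Gamma_i \xi, \Gamma_i \xi\rangle.
$$
Combining this with the Bessel upper bound $\sum_i \langle \Gamma_i \xi, \Gamma_i \xi\rangle \leq A \langle \xi,\xi\rangle$ already in hand, we conclude that $\{\Gamma_i\}_{i \in I}$ is a $K$-g-frame with bounds $1/\alpha^{2}$ and $A$. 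Running the same argument with the roles swapped, using $T_2 T_1^* = I_{\mathcal{U}}$ and the Bessel constant $B$ for $\{\Xi_i\}_{i \in I}$, yields the corresponding $K$-g-frame inequality for $\{\Xi_i\}_{i \in I}$.

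The only delicate point is the invocation of Lemma \ref{Dougl1}: the lemma is stated for $T,Z \in Hom_{\mathcal{A}}^{*}(\mathcal{U})$, whereas $T_1$ has domain $\bigoplus_{i}\mathcal{V}_i$ rather than $\mathcal{U}$. This is only a cosmetic obstacle, since the range-inclusion half of Douglas's theorem holds verbatim for adjointable operators between possibly different Hilbert pro-$C^*$-modules into a common codomain; the factorization $K = T_1 U$ with $U$ uniformly bounded still gives $KK^* = T_1 U U^* T_1^* \leq \|U\|_{\infty}^{2}\, T_1 T_1^*$. Once this is noted, no further difficulty arises, and the theorem follows by the short chain of operator inequalities described above.
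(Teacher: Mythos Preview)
The paper states this theorem twice (once with $\mathcal{X},\mathcal{Y}$ notation, once with $\mathcal{U},\mathcal{V}$ notation) but supplies \emph{no proof} for either version, so there is nothing to compare against directly. Your argument is correct and is exactly the kind of proof one would expect here: from $T_1T_2^*=I_{\mathcal{U}}$ you get $\mathrm{Ran}(T_1)=\mathcal{U}$ (closed), then $\mathrm{Ran}(K)\subset\mathrm{Ran}(T_1)$ feeds into the Douglas-type Lemma~\ref{Dougl1} to produce $KK^*\le\alpha^2 T_1T_1^*$, which is precisely the lower $K$-$g$-frame inequality since $T_1T_1^*$ is the frame operator of $\{\Gamma_i\}$; the adjoint identity $T_2T_1^*=I_{\mathcal{U}}$ handles $\{\Xi_i\}$ symmetrically. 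Your remark about the domain mismatch in Lemma~\ref{Dougl1} is well taken, and your workaround via the explicit factorization $K=T_1(T_2^*K)$, giving $KK^*=T_1(T_2^*K)(T_2^*K)^*T_1^*\le\|T_2^*K\|_\infty^2\,T_1T_1^*$, is the cleanest fix (it also makes the lower bound concrete: $C=\|T_2^*K\|_\infty^{-2}$, with $\|T_2^*\|_\infty\le\sqrt{B}$ from the Bessel bound).

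A slightly shorter route, closer in spirit to the tools the paper has already set up, is to bypass Douglas entirely: surjectivity of $T_1$ is exactly the ``onto'' condition in Lemma~\ref{Lemma2.3}, so $\{\Gamma_i\}$ is in fact a full $g$-frame, and then the $K$-$g$-frame lower bound follows from $\langle K^*\xi,K^*\xi\rangle\le\|K^*\|_\infty^2\langle\xi,\xi\rangle$. Your approach and this one are equivalent in strength; yours has the mild advantage of producing the lower constant directly in terms of $B$ and $K$ without quoting Lemma~\ref{Lemma2.3}.
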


	\section{ Characterizing the $K$-$g$ Frames by Other Concepts}
	In this section, we discuss the concept of identity resolution and explore the idea of quotients derived from bounded operators. These concepts will subsequently be utilized in the construction of $K$-g-frames.

	\begin{definition}
		Let $I$ be an indexing set. A collection of bounded operators $\left\{\Psi_i\right\}_{i \in I}$ on $\mathcal{U}$ is defined as an (unconditional) resolution of the identity on $\mathcal{U}$ if, for every element $\xi \in \mathcal{U}$, it satisfies:
		$$
		\xi = \sum_{i \in I} \Psi_i(\xi)
		$$
		(where the series converges unconditionally for each $\xi \ in \mathcal{U}$). It can be readily shown that if $\left\{\Theta_{i}\right\}_{i \in I}$ and $\left\{\Psi_{i}\right\}_{i \in I}$ are both resolutions of the identity on $\mathcal{U}$, then the combined set $\left\{\Theta_{i} \Psi_{i}\right\}_{i, i \in I}$ also constitutes a resolution of the identity on $\mathcal{U}$.
	\end{definition}

	\begin{theorem}
		Consider an element $K$ in $Hom_{\mathcal{A}}^*(\mathcal{U})$, which is invertible and where both $K$ and its inverse $K^{-1}$ are uniformly bounded. Assume that the collection $\left\{\Psi_i\right\}_{i \in I}$ forms the identity resolution of $\mathcal{U}$ and constitutes a $g$-Bessel sequence with a bound of $D$. Under these conditions, $\left\{\Psi_i\right\}_{i \in I}$ is identified as a $K$-$g$-frame for $\mathcal{U}$ in relation to $\left\{\mathcal{V}_i\right\}_{i \in I}$.
	\end{theorem}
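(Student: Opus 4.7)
The plan is to establish the two defining inequalities of the $K$-$g$-frame condition separately, treating the upper bound as immediate from the hypotheses and concentrating the work on the lower bound.

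\emph{Upper bound.} Since $\{\Psi_i\}_{i\in I}$ is by hypothesis a $g$-Bessel sequence with bound $D$, the inequality $\sum_{i\in I}\langle \Psi_i\xi,\Psi_i\xi\rangle \leq D\langle \xi,\xi\rangle$ holds for every $\xi\in\mathcal{U}$ with no further work.

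\emph{Lower bound.} The task is to produce a constant $C>0$ with $C\langle K^*\xi,K^*\xi\rangle \leq \sum_{i}\langle \Psi_i\xi,\Psi_i\xi\rangle$ for all $\xi\in\mathcal{U}$. My first step is to reduce this to a bound involving $\langle \xi,\xi\rangle$ rather than $\langle K^*\xi,K^*\xi\rangle$. Because $K$ and $K^{-1}$ are both uniformly bounded, so are $K^*$ and $(K^*)^{-1}=(K^{-1})^*$, and Proposition \ref{Prop2.6} applied to $K^*$ gives
$$\|(K^{-1})^*\|_{\infty}^{-2}\langle \xi,\xi\rangle \;\leq\; \langle K^*\xi,K^*\xi\rangle \;\leq\; \|K^*\|_{\infty}^{2}\langle \xi,\xi\rangle.$$
In particular $\langle K^*\xi,K^*\xi\rangle \leq \|K^*\|_\infty^2\langle \xi,\xi\rangle$, so it is enough to exhibit a constant $M>0$ such that $\langle \xi,\xi\rangle \leq M\sum_{i}\langle \Psi_i\xi,\Psi_i\xi\rangle$, and then set $C=M^{-1}\|K^*\|_\infty^{-2}$.

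The second step is to use the identity resolution to connect $\langle \xi,\xi\rangle$ to the Bessel sum. Writing
$$\langle \xi,\xi\rangle = \left\langle \sum_{i\in I}\Psi_i\xi,\xi\right\rangle = \sum_{i\in I}\langle \Psi_i\xi,\xi\rangle,$$
I would apply a Cauchy--Schwarz-type estimate in the Hilbert pro-$C^*$-module (viewing the right-hand side as a pairing of $(\Psi_i\xi)_i$ against a suitable element of $\bigoplus_i\mathcal{V}_i$), and combine it with the Bessel bound, which controls the $\ell^2$-type norm of $(\Psi_i\xi)_i$. Passing to the seminorms $\bar{p}_\alpha$ via the unconditional convergence of the identity resolution then yields the desired inequality.

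\emph{Main obstacle.} The delicate step is the Cauchy--Schwarz estimate just described: the constant sequence $(\xi)_{i\in I}$ is not an element of $\bigoplus_i\mathcal{V}_i$ when $I$ is infinite, so one cannot directly pair it against $(\Psi_i\xi)_i$. The workaround I would adopt is to truncate to finite partial sums of the identity resolution, apply Cauchy--Schwarz in the pro-$C^*$-module to each finite expression (using the continuous $C^*$-seminorms $p_\alpha$ together with the module Cauchy--Schwarz inequality $p_\alpha(\langle a,b\rangle)\leq \bar{p}_\alpha(a)\bar{p}_\alpha(b)$), and then pass to the limit by invoking the unconditional convergence of $\xi=\sum_i\Psi_i\xi$ and the Bessel convergence of $\sum_i\langle\Psi_i\xi,\Psi_i\xi\rangle$. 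Alternatively, one could attempt to argue via Lemma \ref{Dougl1}: since $K$ is invertible, $\operatorname{Ran}(K)=\mathcal{U}$, so the $K$-$g$-frame lower bound is equivalent to surjectivity of the synthesis operator $T$ of $\{\Psi_i\}$; the identity resolution, applied to $\{\Psi_i^*\}$, should then produce elements of $\operatorname{Ran}(T)$ dense in $\mathcal{U}$, and invertibility of $K$ forces the conclusion.
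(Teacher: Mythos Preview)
Your route diverges from the paper's. The paper does not first strip off $K^*$ via Proposition~\ref{Prop2.6}; instead it applies the identity resolution directly to $K^*\xi$ and writes the single chain
\[
\langle K^*\xi,K^*\xi\rangle=\Bigl\langle\sum_{i}\Psi_iK^*\xi,\;\sum_{i}\Psi_iK^*\xi\Bigr\rangle\le\sum_{i}\langle\Psi_iK^*\xi,\Psi_iK^*\xi\rangle\le D\langle K^*\xi,K^*\xi\rangle\le D\|K\|_\infty^{2}\langle\xi,\xi\rangle,
\]
asserting the second inequality without any justification. In effect the paper is claiming $\bigl\langle\sum_i a_i,\sum_i a_i\bigr\rangle\le\sum_i\langle a_i,a_i\rangle$, which is precisely the step you single out as the main obstacle. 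You are right to be wary: that inequality is false in general (take two equal nonzero summands and all others zero).

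The genuine gap in your own plan is that neither workaround you sketch actually closes this hole. Truncated Cauchy--Schwarz gives, at the seminorm level, only an estimate of the form $\bar p_\alpha(\xi)\le\sum_{i\in F}\bar p_\alpha(\Psi_i\xi)+\text{(tail)}$, i.e.\ $\ell^1$ control of the $\Psi_i\xi$; the Bessel hypothesis bounds the $\ell^2$ quantity $\sum_i\bar p_\alpha(\Psi_i\xi)^2$, and there is no uniform passage from one to the other over an infinite index set. Your Douglas-lemma alternative is circular: with $K$ invertible one has $\operatorname{Ran}(K)=\mathcal U$, so the inclusion $\operatorname{Ran}(K)\subset\operatorname{Ran}(T)$ amounts to surjectivity of the synthesis operator, which by Lemma~\ref{Dougl1} is equivalent to the very lower frame inequality you are trying to prove; the identity resolution yields at best dense range, not surjectivity. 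Thus the lower bound remains unestablished in your outline, and the paper circumvents the difficulty only by the unjustified inequality above rather than by supplying an additional idea.
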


	\begin{proof}
		Given that $\left\{\Psi_i\right\}_{i \in I}$ constitutes an identity resolution for the space $\mathcal{U}$, we observe that for every element $\xi$ within $\mathcal{U}$, the expression $\xi=\sum_{i \in I} \Psi_i(\xi)$ is valid. Consequently, the following relationship is established:
		$$
		K^* \xi = \sum_{i \in I} \Psi_i K^* \xi,
		$$
		leading to the inference that
		$$
		\langle K^* \xi , K^* \xi \rangle = \langle \sum_{i \in I} \Psi_i K^* \xi, \sum_{i \in I} \Psi_i K^* \xi \rangle \leq \sum_{i \in I} \langle \Psi_i K^* \xi,  \Psi_i K^* \xi \rangle \leq D \langle  K^* \xi,   K^* \xi \rangle \leq D\|K\|_{\infty}^2\langle \xi, \xi \rangle.
		$$
	\end{proof}

	Let $\mathcal{F}, \mathcal{T} \in Hom_{\mathcal{A}}^*(\mathcal{U})$. The quotient $[\mathcal{F}/ \mathcal{T}]$ is a map from $Ran(\mathcal{F})$ to $Ran(\mathcal{T})$ defined by $\mathcal{T} x \mapsto \mathcal{F} x$. Similar to the case in Hilbert spaces, $\mathcal{Q} = [\mathcal{F} / \mathcal{T}]$ is a linear operator on $\mathcal{U}$ if and only if $Ker(\mathcal{T}) \subset Ker(\mathcal{F})$.

	\begin{theorem}
		Suppose $K \in Hom_{\mathcal{A}}^*(\mathcal{U})$, and consider  $\left\{\Psi_i\right\}_{i \in I}$, which constitutes a g-Bessel sequence in the context of $\mathcal{U}$ with respect to $\left\{\mathcal{V}_i\right\}_{i \in I}$. Given that $S$ represents the frame operator for $\left\{\Psi_i\right\}_{i \in I}$, it follows that $\left\{\Psi_i\right\}_{i \in I}$ forms a $K$-g-frame with respect to $\left\{\mathcal{V}_i\right\}_{i \in I}$ if and solely if the operator defined by $\left[K^* / S^{\frac{1}{2}}\right]$ demonstrates boundedness.
	\end{theorem}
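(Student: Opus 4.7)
The plan is to reduce both directions of the equivalence to the operator inequality $C K K^{*} \leq S$ for some $C > 0$, which by Lemma \ref{Lemma2.5} characterizes the $K$-$g$-frame property among $g$-Bessel sequences. The bridge between the two formulations is the identity $\langle S \xi, \xi \rangle = \langle S^{1/2}\xi, S^{1/2}\xi \rangle$, which is available because $S = \sum_{i \in I} \Psi_i^{*}\Psi_i$ is positive and self-adjoint and so admits a positive square root with $S = (S^{1/2})^{2}$.

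For the forward direction, assume $\{\Psi_i\}_{i \in I}$ is a $K$-$g$-frame. Lemma \ref{Lemma2.5} then produces $C > 0$ with $C K K^{*} \leq S$; written in inner-product form,
$$
C\, \langle K^{*}\xi, K^{*}\xi \rangle \;\leq\; \langle S^{1/2}\xi, S^{1/2}\xi\rangle, \qquad \forall\, \xi \in \mathcal{U}.
$$
This inequality immediately forces $Ker(S^{1/2}) \subseteq Ker(K^{*})$, which (as explained in the paragraph preceding the statement) is precisely what makes the quotient $[K^{*}/S^{1/2}]: S^{1/2}\xi \mapsto K^{*}\xi$ well defined as a linear operator, and the same inequality shows that this operator is bounded by $1/\sqrt{C}$. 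Conversely, if $[K^{*}/S^{1/2}]$ is bounded by some $M > 0$, then for all $\xi \in \mathcal{U}$,
$$
\langle K^{*}\xi, K^{*}\xi \rangle \;\leq\; M^{2}\, \langle S^{1/2}\xi, S^{1/2}\xi\rangle \;=\; M^{2}\, \langle S\xi, \xi \rangle,
$$
which is exactly $K K^{*} \leq M^{2} S$. Combined with the standing $g$-Bessel hypothesis (which supplies the upper frame bound), Lemma \ref{Lemma2.5} then certifies that $\{\Psi_i\}_{i \in I}$ is a $K$-$g$-frame with lower bound $1/M^{2}$.

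The main obstacle is the careful translation between the two formulations: one must interpret boundedness of $[K^{*}/S^{1/2}]$, whose natural domain is $Ran(S^{1/2})$, as the global operator inequality $K K^{*} \leq M^{2} S$ on all of $\mathcal{U}$, and in the opposite direction one must extract the kernel inclusion $Ker(S^{1/2}) \subseteq Ker(K^{*})$ needed for well-definedness of the quotient. Both steps rest on the identity $\langle S \xi, \xi \rangle = \langle S^{1/2}\xi, S^{1/2}\xi \rangle$, so the argument is mostly bookkeeping once Lemma \ref{Lemma2.5} is in hand; no subtler estimate on the pro-$C^{*}$-module structure is required.
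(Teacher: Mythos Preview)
Your proof is correct and follows essentially the same route as the paper: both arguments pivot on the identity $\langle S\xi,\xi\rangle = \langle S^{1/2}\xi, S^{1/2}\xi\rangle$, use the resulting inequality $C\langle K^{*}\xi,K^{*}\xi\rangle \leq \langle S^{1/2}\xi,S^{1/2}\xi\rangle$ to extract the kernel inclusion and boundedness of the quotient in one direction, and read the bound on the quotient back as the lower $K$-$g$-frame inequality in the other. The only cosmetic difference is that you route the $K$-$g$-frame condition through Lemma~\ref{Lemma2.5} (the operator inequality $CKK^{*}\leq S$), whereas the paper unpacks the definition directly; since Lemma~\ref{Lemma2.5} is itself just a one-line restatement of that definition, the two presentations are interchangeable.
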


	\begin{proof}
		Given that $\left\{\Psi_i\right\}_{i \in I}$ constitutes a $K$-g-frame, we can ascertain the existence of a constant $C>0$. This ensures that for any $\xi \in \mathcal{U}$, the following inequality holds:
		
		$$
		C\langle K^* \xi , K^* \xi \rangle \leq \sum_{i \in I}\langle \Psi_i \xi, \Psi_i \xi  \rangle=\langle Se \xi, \xi\rangle=\langle Se^{\frac{1}{2}} \xi , Se^{\frac{1}{2}} \xi \rangle
		$$
		holds. This implies that
		$$
		C\langle K^* \xi ,  K^* \xi\rangle \leq\langle Se^{\frac{1}{2}} \xi , Se^{\frac{1}{2}} \xi \rangle.
		$$
		Define the operator $\Xi: Ran\left(Se^{\frac{1}{2}}\right) \rightarrow Ran\left(K^*\right)$ by $\Xi\left(Se^{\frac{1}{2}} \xi\right)=K^* \xi$ for all $\xi \in \mathcal{U}$. The operator $\Xi$ is well-defined since $Ker\left(Se^{\frac{1}{2}}\right) \subseteq Ker\left(K^*\right)$. Consequently, we obtain
		$$
		\langle \Xi Se^{\frac{1}{2}} \xi, \Xi Se^{\frac{1}{2}} \xi \rangle=\langle K^* \xi,K^* \xi\rangle \leq \frac{1}{\sqrt{C}} \langle Se^{\frac{1}{2}} \xi , Se^{\frac{1}{2}} \xi \rangle.
		$$
		Thus, $\Xi$ is bounded. By the concept of quotient of bounded operators, $\Xi$ is expressed as $\left[K^* / Se^{\frac{1}{2}}\right]$.
		
		On the other hand, if $\left[K^* / Se^{\frac{1}{2}}\right]$ is bounded, there exists a constant $D>0$ such that for all $\xi \in \mathcal{U}$,
		$$
		\langle K^* \xi , K^* \xi \rangle \leq D\langle Se^{\frac{1}{2}} \xi ,Se^{\frac{1}{2}} \xi\rangle,
		$$
		implying
		$$
		\frac{1}{D}\langle K^* \xi, K^* \xi\rangle ^2 \leq\langle Se^{\frac{1}{2}} \xi , Se^{\frac{1}{2}} \xi\rangle =\sum_{i \in I}\langle \Psi_i \xi, \Psi_i \xi\rangle.
		$$
		Therefore, $\left\{\Psi_i\right\}_{i \in I}$ is a $K$-g-frame for $\mathcal{U}$ with respect to $\left\{\Theta_i\right\}_{i in I}$.
	\end{proof}
	
	\begin{theorem}
		Let $K \in Hom_{\mathcal{A}}^{*}(\mathcal{X})$ and $\left\{\Psi_i\right\}_{i \in I}$ be a $g$-Bessel sequence for $\mathcal{X}$ with respect to $\left\{\mathcal{Y}_i\right\}_{i \in I}$ with frame operator $S$. If $Q \in Hom_{\mathcal{A}}^{*}(\mathcal{X})$, then the following statements are equivalent:
		\begin{enumerate}
			\item $\left[K^* / S^{\frac{1}{2}} Q\right]$ is bounded.
			\item $\left\{\Psi_i Q \right\}_{i \in I}$ is a $K$-g-frame for $\mathcal{X}$ with respect to $\left\{\mathcal{Y}_i \right\}_{i \in I}$;
			
		\end{enumerate}
	\end{theorem}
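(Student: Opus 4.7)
The plan is to prove this by adapting the strategy of the previous theorem, exploiting the fact that the frame operator of $\{\Psi_i Q\}_{i \in I}$ factors through $S^{1/2} Q$. The first step is the identity that ties the two statements together: for every $\xi \in \mathcal{X}$,
$$
\sum_{i \in I} \langle \Psi_i Q \xi, \Psi_i Q \xi \rangle = \langle S Q \xi, Q \xi \rangle = \langle S^{\frac{1}{2}} Q \xi, S^{\frac{1}{2}} Q \xi \rangle,
$$
since $S$ is positive and self-adjoint with $S = (S^{\frac{1}{2}})^* S^{\frac{1}{2}}$. This shows in particular that $\{\Psi_i Q\}_{i \in I}$ is automatically a $g$-Bessel sequence (using the Bessel bound of $\{\Psi_i\}$ and uniform boundedness of $Q$), so the entire content of condition (2) reduces to establishing the \emph{lower} $K$-$g$-frame inequality.

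For (1) $\Rightarrow$ (2), I assume $[K^*/S^{\frac{1}{2}} Q]$ is bounded with constant $M$. Interpreting boundedness of the quotient in the pro-$C^*$ setting yields $\langle K^* \xi, K^* \xi \rangle \leq M^2 \langle S^{\frac{1}{2}} Q \xi, S^{\frac{1}{2}} Q \xi\rangle$ for all $\xi \in \mathcal{X}$. Combining with the identity above produces the lower bound
$$
\frac{1}{M^2} \langle K^* \xi, K^* \xi \rangle \leq \sum_{i \in I} \langle \Psi_i Q \xi, \Psi_i Q \xi \rangle,
$$
which, together with the Bessel upper bound, gives condition (2).

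For (2) $\Rightarrow$ (1), I define the quotient map $\Xi : \operatorname{Ran}(S^{\frac{1}{2}} Q) \to \operatorname{Ran}(K^*)$ by $\Xi(S^{\frac{1}{2}} Q \xi) = K^* \xi$, mirroring the construction in the preceding theorem. The $K$-$g$-frame lower bound ensures that $S^{\frac{1}{2}} Q \xi = 0$ forces $K^* \xi = 0$, so $\operatorname{Ker}(S^{\frac{1}{2}} Q) \subseteq \operatorname{Ker}(K^*)$ and $\Xi$ is well-defined. Using the frame bound $C$, the inequality
$$
\langle \Xi(S^{\frac{1}{2}} Q \xi), \Xi(S^{\frac{1}{2}} Q \xi)\rangle = \langle K^* \xi, K^* \xi \rangle \leq \tfrac{1}{C}\langle S^{\frac{1}{2}} Q \xi, S^{\frac{1}{2}} Q \xi \rangle
$$
gives boundedness of $\Xi$, and by definition $\Xi = [K^*/S^{\frac{1}{2}} Q]$.

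I expect the main subtlety to be the well-definedness of the quotient in the $\Leftarrow$ direction, specifically ensuring $\operatorname{Ker}(S^{\frac{1}{2}} Q) \subseteq \operatorname{Ker}(K^*)$ from the inner-product inequality; this is where the pro-$C^*$-module framework requires care, since one must pass from the $\mathcal{A}$-valued inequality to a kernel inclusion via the $C^*$-seminorms $\bar{p}_{\mathcal{X}}$. Everything else is a transparent reshuffling of the argument used to prove the preceding theorem.
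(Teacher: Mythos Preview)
Your proposal is correct and follows essentially the same approach as the paper: both arguments hinge on the identity $\sum_{i \in I}\langle \Psi_i Q\xi,\Psi_i Q\xi\rangle = \langle S^{1/2}Q\xi, S^{1/2}Q\xi\rangle$ and then pass back and forth between the lower $K$-$g$-frame inequality and the boundedness inequality for the quotient. The only difference is that you spell out the well-definedness of $\Xi$ via the kernel inclusion (as in the preceding theorem), whereas the paper simply asserts boundedness of $[K^*/S^{1/2}Q]$ directly from $C\langle K^*\xi,K^*\xi\rangle \leq \langle S^{1/2}Q\xi,S^{1/2}Q\xi\rangle$; this is a cosmetic rather than a substantive distinction.
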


	\begin{theorem}
		Consider $K \in Hom_{\mathcal{A}}^{*}(\mathcal{X})$ and let $\left\{\Psi_i\right\}_{i \in I}$ constitute a $g$-Bessel sequence in the context of $\mathcal{U}$, associated with $\left\{\mathcal{V}_i\right\}_{i in I}$, and possessing the frame operator $S$. Given another element $Q$ within $Hom_{\mathcal{A}}^{*}(\mathcal{V})$, the following conditions are equivalent:
		\begin{enumerate}
			\item The operator $\left[K^* / S^{\frac{1}{2}} Q\right]$ exhibits boundedness.
			\item  $\left\{\Psi_i Q \right\}_{i \in I}$ forms a $K$-g-frame for $\mathcal{U}$ with respect to $\left\{\mathcal{V}_i \right\}_{i \in I}$.
		\end{enumerate}
	\end{theorem}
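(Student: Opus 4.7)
The plan is to mirror the structure of the preceding theorem about $[K^*/S^{1/2}]$, adapting it to incorporate the operator $Q$. The central identity I will use is that, since $S = \sum_{i\in I}\Psi_i^*\Psi_i$ is the frame operator of $\{\Psi_i\}_{i\in I}$, for every $\eta \in \mathcal{U}$ one has $\sum_{i\in I}\langle \Psi_i \eta, \Psi_i \eta\rangle = \langle S\eta,\eta\rangle = \langle S^{1/2}\eta, S^{1/2}\eta\rangle$; applying this with $\eta = Q\xi$ yields the key equality
$$\sum_{i\in I}\langle \Psi_i Q\xi, \Psi_i Q\xi\rangle = \langle S^{1/2}Q\xi, S^{1/2}Q\xi\rangle, \quad \forall \xi \in \mathcal{U}.$$
This single identity converts the K-g-frame inequality for $\{\Psi_i Q\}_{i\in I}$ into a comparison between $K^*\xi$ and $S^{1/2}Q\xi$, which is exactly what encodes boundedness of the quotient.

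For the direction $(2) \Rightarrow (1)$, I would assume $\{\Psi_i Q\}_{i\in I}$ is a $K$-g-frame with lower bound $C>0$. Combining the defining lower bound with the key identity above gives $C\langle K^*\xi, K^*\xi\rangle \leq \langle S^{1/2}Q\xi, S^{1/2}Q\xi\rangle$ for all $\xi$, which in particular implies $\operatorname{Ker}(S^{1/2}Q) \subseteq \operatorname{Ker}(K^*)$, so the quotient $\Xi := [K^*/S^{1/2}Q]: \operatorname{Ran}(S^{1/2}Q) \to \operatorname{Ran}(K^*)$ is a well-defined linear map. Then the same inequality directly shows $\langle \Xi S^{1/2}Q\xi, \Xi S^{1/2}Q\xi\rangle \leq \tfrac{1}{C}\langle S^{1/2}Q\xi, S^{1/2}Q\xi\rangle$, establishing its boundedness.

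For the converse $(1) \Rightarrow (2)$, I would use boundedness of $[K^*/S^{1/2}Q]$ to extract a constant $D>0$ with $\langle K^*\xi, K^*\xi\rangle \leq D\langle S^{1/2}Q\xi, S^{1/2}Q\xi\rangle$ for all $\xi \in \mathcal{U}$, and then invoke the key identity to get $\tfrac{1}{D}\langle K^*\xi, K^*\xi\rangle \leq \sum_{i\in I}\langle \Psi_i Q\xi, \Psi_i Q\xi\rangle$, which is the $K$-g-frame lower bound. For the upper bound, since $\{\Psi_i\}_{i\in I}$ is g-Bessel (with bound, say, $B$) and $Q \in \operatorname{Hom}_{\mathcal{A}}^*(\mathcal{U})$ is uniformly bounded, a straightforward estimate gives $\sum_{i\in I}\langle \Psi_i Q\xi, \Psi_i Q\xi\rangle \leq B\langle Q\xi, Q\xi\rangle \leq B\|Q\|_{\infty}^2 \langle \xi,\xi\rangle$ via Proposition \ref{Prop2.6}.

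The main obstacle I anticipate is the justification that $[K^*/S^{1/2}Q]$ is genuinely well-defined as a linear operator in the pro-$C^*$-algebra setting; this requires verifying the kernel inclusion $\operatorname{Ker}(S^{1/2}Q) \subseteq \operatorname{Ker}(K^*)$ from the module-valued inequality rather than a scalar one. This step uses the standard fact that $\langle T\xi, T\xi\rangle = 0$ iff $T\xi = 0$ in a Hilbert pro-$C^*$-module, applied seminorm-wise through $p_\alpha$. Apart from this technicality, the proof reduces to the same quotient-operator argument as in the previous theorem, with $Q$ threaded through at every step.
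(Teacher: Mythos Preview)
Your proposal is correct and follows essentially the same approach as the paper: both directions rest on the identity $\sum_{i\in I}\langle \Psi_i Q\xi,\Psi_i Q\xi\rangle=\langle S^{1/2}Q\xi,S^{1/2}Q\xi\rangle$, from which the $K$-$g$-frame lower bound and the boundedness of $[K^*/S^{1/2}Q]$ are seen to be equivalent, with the upper bound coming from the Bessel estimate and the uniform bound on $Q$. If anything, you are slightly more careful than the paper in explicitly checking the kernel inclusion $\operatorname{Ker}(S^{1/2}Q)\subseteq\operatorname{Ker}(K^*)$ so that the quotient is well-defined; one small remark is that the inequality $\langle Q\xi,Q\xi\rangle\leq\|Q\|_\infty^2\langle\xi,\xi\rangle$ only needs uniform boundedness of $Q$, not the full invertibility hypothesis of Proposition~\ref{Prop2.6}.
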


	\begin{proof}
		$(2) \Rightarrow (1)$ Assume $C$ is the lower bound constant for the $K$-$g$-frame $\left\{\Psi_i Q\right\}_{i \in I}$ for $\mathcal{U}$ with respect to $\left\{\Theta_i\right\}_{i \in I}$. Then,
		$$
		C\langle K^* \xi, K^* \xi \rangle \leq \sum_{i \in I}\langle \Psi_i Q \xi, \Psi_i Q \xi \rangle .
		$$
		Furthermore,
		$$
		\sum_{i \in I}\langle \Psi_i Q \xi,  \Psi_i Q \xi \rangle=\langle S Q \xi, Q \xi \rangle=\langle S^{\frac{1}{2}} Q \xi , S^{\frac{1}{2}} Q \xi\rangle,
		$$
		since $\left\{\Psi_i\right\}_{i \in I}$ is a g-Bessel sequence for $\mathcal{U}$ and $S$ its frame operator. Therefore,
		$$
		C\langle K^* \xi, K^* \xi \rangle \leq \langle S^{\frac{1}{2}} Q \xi , S^{\frac{1}{2}} Q \xi \rangle,
		$$
		indicating that $\left[K^* / S^{\frac{1}{2}} Q\right]$ is bounded. \\
		$(1) \Rightarrow (2)$ If $\left[K^* / S^{\frac{1}{2}} Q\right]$ is bounded, there exists a constant $\beta>0$ such that
		$$
		\langle K^* \xi, K^* \xi \rangle \leq \beta \langle S^{\frac{1}{2}} Q \xi, S^{\frac{1}{2}} Q \xi \rangle.
		$$
		Assuming that the g-Bessel sequence $\left\{\Psi_i Q\right\}_{i \in I}$ for $\mathcal{U}$ with respect to $\left\{\Theta_i\right\}_{i \in I}$ has an upper bound constant $D$, it follows
		$$
		\frac{1}{\beta}\langle K^* \xi ,K^* \xi \rangle \leq\langle S^{\frac{1}{2}} Q \xi, S^{\frac{1}{2}} Q \xi \rangle =\sum_{i \in I}\langle \Psi_i Q \xi, \Psi_i Q \xi \rangle  \leq D\|\mathfrak{E}\|_{\infty}^2\langle \xi, \xi \rangle,
		$$
		leading to the conclusion.
	\end{proof}

	\section*{Availablity of data and materials}
	Not applicable.

	\section*{Conflict of interest}
	The authors declare that they have no competing interests.

	\section*{Fundings}
	The authors declare that there is no funding available for this paper.


\begin{thebibliography}{99}
		\bibitem{Alizadeh} L. Alizadeh, M. Hassani, On frames for countably generated Hilbert modules over locally $C^{\ast}$-algebras, Commun. Korean Math. Soc. 33 (2018) 527–533. https://doi.org/10.4134/CKMS.c170194.
		\bibitem{Assila} N. Assila, H. Labrigui, A. Touri, M. Rossafi, Integral operator frames on Hilbert $C^{\ast}$-modules, Ann. Dell’Universita Di Ferrara (2024). https://doi.org/10.1007/s11565-024-00501-z.
		\bibitem{Azhini} M. Azhini, N. Haddadzadeh, Fusion frames in Hilbert modules over proC*-algebras, Int. J. Ind. Math. 5 (2013) 109–118.
		
		\bibitem{Rouma1} R. El Jazzar, R. Mohamed, On Frames in Hilbert Modules Over Locally $C^{\ast}$-Algebras, Int. J. Anal. Appl. 21 (2023) 130. https://doi.org/10.28924/2291-8639-21-2023-130.
		\bibitem{Frago} M. Fragoulopoulou, Tensor products of enveloping locally $C^{\ast}$-algebras, Schriftenreihe, Univ. M\"{u}nster, (1997) 1-81.
		
		\bibitem{Frag} M. Fragoulopoulou, An introduction of the representation theory of topological $\ast$-algebras, Schriftenreihe, Univ. M\"{u}nster, 48 (1988) 1–81. 
		\bibitem{Ghiati}  M. Ghiati, M. Rossafi, M. Mouniane, H. Labrigui, A. Touri, Controlled continuous $\ast$-g-frames in Hilbert $C^{\ast}$-modules, J. Pseudo-Differential Oper. Appl. 15 (2024). 
		\bibitem{Haddad}
		N. Haddadzadeh, G-FRAMES IN HILBERT PRO-C*-MODULES, Int. J. Pure Apllied Math. 105 (2015) 727–743. https://doi.org/10.12732/ijpam.v105i4.13.
		\bibitem{Inoue} A. Inoue, Locally $C^{\ast}$-algebra, Mem. Fac. Sci. Kyushu Univ. Ser. A, Math. 25 (1972) 197–235. https://doi.org/10.2206/kyushumfs.25.197.
		\bibitem{Mallios} A. Mallios, Topological Algebras: Selected Topics, North Holland, Elsevier, 2011.
		\bibitem{Massit} H. Massit, M. Rossafi, C. Park, Some relations between continuous generalized frames, Afrika Mat. 35 (2024) 12. https://doi.org/10.1007/s13370-023-01157-2.
		\bibitem{Philip} N. C. Phillips, Inverse limits of $C^{\ast}$-algebras, J. Oper. Theory 19 (1988) 159–195.
		\bibitem{Philips} N. C. Phillips, Representable $K$-Theory for $\sigma$-$C^{\ast}$-algebras, $K$-Theory 3 (1989) 441–478. 
		\bibitem{Rouma} M. Rossafi, R. El Jazzar, R. Mohapatra, Douglas'factorization theorem and atomic system in Hilbert pro$-C^{\ast}$-module, Sahand Commun. Math. Anal. 21 (2024) 25–49.
		\bibitem{Rossafi} M. Rossafi, F.-D. Nhari, C. Park, S. Kabbaj, Continuous g-Frames with $C^{\ast}$-Valued Bounds and Their Properties, Complex Anal. Oper. Theory 16 (2022) 44. 
		
		
	\end{thebibliography}
	\end{document}